\def\newthm#1#2{\newtheorem{#1}[dummy]{#2}%
  \expandafter\def\csname#2\endcsname##1{\hyperref[#1:##1]{#2~\ref*{#1:##1}}}}
\theoremstyle{definition}
\newcommand{\Section}[1]{\hyperref[sec:#1]{Section~\ref*{sec:#1}}}
\newcommand{\Table}[1]{\hyperref[tab:#1]{Table~\ref*{tab:#1}}}
\newcommand{\eqn}[1]{\hyperref[eqn:#1]{(\ref*{eqn:#1})}}
\DeclareMathOperator{\Gr}{Gr}
\DeclareMathOperator{\Fl}{Fl}
\DeclareMathOperator{\LG}{LG}
\DeclareMathOperator{\OG}{OG}
\DeclareMathOperator{\GL}{GL}
\DeclareMathOperator{\QH}{QH}
\DeclareMathOperator{\QK}{QK}
\DeclareMathOperator{\codim}{codim}
\DeclareMathOperator{\Lie}{Lie}
\renewcommand{\P}{{\mathbb P}}
\newcommand{\C}{{\mathbb C}}
\newcommand{\Z}{{\mathbb Z}}
\newcommand{\cF}{{\mathcal F}}
\newcommand{\cO}{{\mathcal O}}
\newcommand{\cZ}{{\mathcal Z}}
\newcommand{\euler}[1]{\chi_{_{#1}}}
\newcommand{\Teuler}[1]{\chi_{_{#1}}^{T}}
\newcommand{\pt}{\text{pt}}
\newcommand{\la}{{\lambda}}
\newcommand{\ga}{{\gamma}}
\newcommand{\Bl}{\text{B$\ell$}}
\newcommand{\ev}{\operatorname{ev}}
\newcommand{\wt}{\widetilde}
\newcommand{\wb}{\overline}
\newcommand{\ov}{\overline}
\newcommand{\ignore}[1]{}
\newcommand{\Mb}{\wb{\mathcal M}}
\def\noin{\noindent}
\def\N{{\mathbb N}}
\DeclareMathOperator{\im}{Im}
\begin{document}

\title{Projected Gromov-Witten varieties in cominuscule spaces}

\date{June 7, 2017}

\author{Anders~S.~Buch}
\address{Department of Mathematics, Rutgers University, 110
  Frelinghuysen Road, Piscataway, NJ 08854, USA}
\email{asbuch@math.rutgers.edu}

\author{Pierre--Emmanuel Chaput}
\address{Domaine Scientifique Victor Grignard, 239, Boulevard des
  Aiguillettes, Universit{\'e} de Lorraine, B.P. 70239,
  F-54506 Vandoeuvre-l{\`e}s-Nancy Cedex, France}
\email{pierre-emmanuel.chaput@univ-lorraine.fr}

\author{Leonardo~C.~Mihalcea}
\address{Department of Mathematics, Virginia Tech University, 460 McBryde,
  Blacksburg VA 24060, USA}
\email{lmihalce@math.vt.edu}

\author{Nicolas Perrin}
\address{Laboratoire de Math{\'e}matiques de Versailles, UVSQ, CNRS,
  Universit{\'e} Paris-Saclay, 78035 Versailles, France}
\email{nicolas.perrin@uvsq.fr}

\subjclass[2000]{Primary 14N35; Secondary 19E08, 14N15, 14M15, 14M20, 14M22}

\thanks{The first author was supported in part by NSF grant
  DMS-1205351.}

\thanks{The third author was supported in part by NSA Awards
  H98230-13-1-0208 and H98320-16-1-0013 and a Simons Collaboration
  Grant.}

\thanks{The fourth author was supported by a public grant as part of
  the Investissement d'avenir project, reference ANR-11-LABX-0056-LMH,
  LabEx LMH.}

\begin{abstract}
  A projected Gromov-Witten variety is the union of all rational
  curves of fixed degree that meet two opposite Schubert varieties in
  a homogeneous space $X = G/P$.  When $X$ is cominuscule we prove
  that the map from a related Gromov-Witten variety is cohomologically
  trivial.  This implies that all (3 point, genus zero) $K$-theoretic
  Gromov-Witten invariants of $X$ are determined by projected
  Gromov-Witten varieties, which extends an earlier result of Knutson,
  Lam, and Speyer, and provides an alternative version of the `quantum
  equals classical' theorem.  Our proof uses that any projected
  Gromov-Witten variety in a cominuscule space is also a projected
  Richardson variety.
\end{abstract}

\maketitle

\markboth{A.~BUCH, P.--E.~CHAPUT, L.~MIHALCEA, AND N.~PERRIN}
{PROJECTED GROMOV-WITTEN VARIETIES IN COMINUSCULE SPACES}

\section{Introduction}\label{sec:intro}

Let $X = G/P$ be a homogeneous space defined by a simple complex Lie
group $G$ and a parabolic subgroup $P$.  Given an effective degree
$d \in H_2(X;\Z)$, the Kontsevich moduli space $M_d = \Mb_{0,3}(X,d)$
parametrizes the set of 3-pointed stable maps to $X$ of degree $d$ and
genus zero.  Let $\ev_i : M_d \to X$ be the $i$-th evaluation map.
Given two opposite Schubert varieties $X_u$ and $X^v$ in $X$, let
$M_d(X_u,X^v) = \ev_1^{-1}(X_u) \cap \ev_2^{-1}(X^v) \subset M_d$
denote the {\em Gromov-Witten variety\/} of stable maps that send the
first marked point to $X_u$ and the second marked point to $X^v$.  It
was proved in \cite{buch.chaput.ea:finiteness} that $M_d(X_u,X^v)$ is
either empty or unirational with rational singularities.  The image
$\Gamma_d(X_u,X^v) = \ev_3(M_d(X_u,X^v)) \subset X$ is called a {\em
  projected Gromov-Witten variety}.  This variety $\Gamma_d(X_u,X^v)$
is the union of all connected rational curves of degree $d$ in $X$
that meet both of the Schubert varieties $X_u$ and $X^v$.

Gromov-Witten varieties are closely related to the (ordinary and
$K$-theoretic) Gro\-mov-Witten invariants.  These invariants are
defined by
\[
I_d([\cO_{X_u}], [\cO_{X^v}], \sigma ) = \euler{M_d}(
\ev_1^*[\cO_{X_u}] \cdot \ev_2^*[\cO_{X^v}] \cdot \ev_3^*(\sigma)) \,,
\]
where $\sigma \in K(X)$ is any $K$-theory class and
$\euler{M_d}: K(M_d) \to \Z$ is the sheaf Euler characteristic map.
It follows from Sierra's sheaf-theoretic version of Kleiman's
transversality theorem \cite{sierra:general} that
$\ev_1^* [\cO_{X_u}] \cdot \ev_2^*[\cO_{X^v}] = [\cO_{M_d(X_u, X^v)}]
\in K(M_d)$,
see \cite[\S 4]{buch.mihalcea:quantum}.  In particular, any 2-point
Gromov-Witten invariant of $X$ is determined by
$I_d([\cO_{X_u}], [\cO_{X^v}], 1) =
\euler{M_d}([\cO_{M_d(X_u,X^v)}])$,
and so is equal to zero or one by the geometric properties of
Gromov-Witten varieties proved in \cite{buch.chaput.ea:finiteness}.
More generally, the projection formula implies that the 3-point
Gromov-Witten invariants of $X$ are given by
\[
I_d([\cO_{X_u}], [\cO_{X^v}], \sigma )= \euler{X}
((\ev_3)_*[\cO_{M_d(X_u, X^v)}] \cdot \sigma) \,.
\]
It is therefore natural to study the cohomological properties of the
restricted evaluation map
\begin{equation}\label{eqn:res}
  \ev_3 : M_d(X_u,X^v) \to \Gamma_d(X_u,X^v) \,.
\end{equation}

In this paper we study the projected Gromov-Witten varieties when $X$
is a cominuscule variety, i.e.\ a Grassmannian of type A, a Lagrangian
Grassmannian, a maximal orthogonal Grassmannian, a quadric
hypersurface, or one of two exceptional varieties called the Cayley
plane and the Freudenthal variety.  Our main result states that, when
$X$ is cominuscule, the restricted evaluation map \eqn{res} is
{\em cohomologically trivial}, which means that the pushforward of the
structure sheaf of $M_d(X_u,X^v)$ is the structure sheaf of
$\Gamma_d(X_u,X^v)$ and the higher direct images of the first sheaf
are zero.  This result implies that the (small) quantum cohomology
ring $\QH(X)$ and the quantum $K$-theory ring $\QK(X)$ are determined
by the projected Gromov-Witten varieties in $X$.  More precisely, the
$K$-theoretic (3-point genus zero) Gromov-Witten invariants of $X$
satisfy the identity
\begin{equation}\label{eqn:gwinv}
  I_d([\cO_{X_u}], [\cO_{X^v}], \sigma) =
  \euler{X}([\cO_{\Gamma_d(X_u,X^v)}] \cdot \sigma) \,.
\end{equation}
Knutson, Lam, and Speyer have earlier proved this identity for
cohomological Gromov-Witten invariants of Grassmannians of type A
\cite{knutson.lam.ea:positroid}.  We apply our identity to compute the
square of a point in the quantum $K$-theory ring of any cominuscule
variety.

The identity \eqn{gwinv} can be interpreted as an alternative version
of the `quantum equals classical' theorem for cominuscule
Gromov-Witten invariants, which was proved in various generalities in
the papers \cite{buch.kresch.ea:gromov-witten,
  chaput.manivel.ea:quantum*1, buch.mihalcea:quantum,
  chaput.perrin:rationality}.  It has the advantage of being
completely uniform.  In particular, it avoids a special case of the
original version that concerns the degree $d=3$ for the Cayley plane
$E_6/P_6$ \cite{chaput.perrin:rationality}.  In addition it reveals
that certain (equivariant) $K$-theoretic Gromov-Witten invariants have
alternating signs (see \Corollary{pos} and \Remark{pos}), generalizing
results of Buch \cite{buch:littlewood-richardson}, Brion
\cite{brion:positivity}, and Anderson, Griffeth and Miller
\cite{anderson.griffeth.ea:positivity}.

If $d=0$, the Gromov-Witten variety $M_d(X_u, X^v)$ is an intersection
$X_u \cap X^v$ of opposite Schubert varieties in $X$, also called a
{\em Richardson variety}.  Let $B \subset P$ be a Borel subgroup, set
$F = G/B$, and let $\rho : F \to X$ be the projection.  If
$R \subset F$ is any Richardson variety, then the image
$\rho(R) \subset X$ is called a {\em projected Richardson variety}.
Projected Richardson varieties have been studied by Lusztig
\cite{lusztig:total} and Rietsch \cite{rietsch:closure} in the context
of total positivity.  For arbitrary homogeneous spaces $X = G/P$ it
was proved by Billey and Coskun \cite{billey.coskun:singularities} and
by Knutson, Lam, and Speyer \cite{knutson.lam.ea:projections} that any
projected Richardson variety $\rho(R)$ is Cohen-Macaulay with rational
singularities, and the restricted map $\rho : R \to \rho(R)$ is
cohomologically trivial.  He and Lam have recently related the
$K$-theory classes of projected Richardson varieties to the
$K$-homology of the affine Grassmannian \cite{he.lam:projected}.

The results of this paper show that the restricted evaluation map
$\ev_3$ and the map $\rho$ have similar properties when $X$ is a
cominuscule variety.  We already mentioned cohomological triviality.
In addition, if $X$ is cominuscule, then any projected Gromov-Witten
variety in $X$ is also a projected Richardson variety.  This result is
derived from the main construction used to prove the $K$-theoretic
version of the `quantum equals classical' theorem in
\cite{buch.mihalcea:quantum, chaput.perrin:rationality}.  In type A
this analysis shows that any variety $\Gamma_d(X_u,X^v)$ is the image
of a Richardson variety in a three-step flag manifold obtained from
the Richardson variety of kernel-span pairs of rational curves passing
through $X_u$ and $X^v$; see \S 5, and also \cite{buch:quantum,
  buch.kresch.ea:gromov-witten, knutson.lam.ea:positroid}.  In most
cases we can derive the cohomological triviality of the map
$\ev_3 : M_d(X_u,X^v) \to \Gamma_d(X_u,X^v)$ from the cohomological
triviality of a projection $\rho : R \to \rho(R) = \Gamma_d(X_u,X^v)$
of a Richardson variety $R$.  However, a separate argument is needed
when $X$ is the Cayley plane $E_6/P_6$ and $d=3$, as in this case the
analogue of the three-step flag manifold constructed in
\cite{chaput.perrin:rationality} fails to be a homogeneous space.  In
this case we obtain our result by proving that the general fibers of
the restricted evaluation map are rationally connected.

This paper is organized as follows.  In \Section{ctriv} we briefly
discuss cohomologically trivial maps and state some useful results.
\Section{intschub} explains our notation for Schubert varieties and
proves two results about intersections of Schubert varieties in
special position.  In \Section{projgw} we define projected
Gromov-Witten varieties and state our main result as well as some
consequences.  \Section{qclas} recalls the `quantum equals classical'
theorem for cominuscule varieties and uses it to prove our main result
whenever $X$ is not the Cayley plane or $d \neq 3$.  This section also
explains how to compute the dimension of a projected Gromov-Witten
variety and gives an example of a projected Richardson variety in a
cominuscule space that is not a projected Gromov-Witten variety.
Finally, \Section{cayley} proves the main theorem for the Cayley
plane, and \Section{square} gives the formula for the square of a
point in the quantum $K$-theory ring.

{\em Acknowledgment.} We thank Thomas Lam for useful discussions
regarding the dimension of a projected Richardson variety.  We also
thank the referee for several helpful comments.

\section{Cohomologically trivial maps}\label{sec:ctriv}

In this section we state some facts about cohomologically trivial maps
which are required in later sections.

\begin{defn}
  A morphism $f : X \to Y$ of schemes is {\em cohomologically
    trivial\/} if we have $f_* \cO_X = \cO_Y$ and $R^i f_* \cO_X = 0$
  for $i > 0$.
\end{defn}

Notice that if $f : X \to Y$ is proper and cohomologically trivial,
then we have $f_* [\cO_X] = [\cO_Y] \in K_\circ(Y)$ in the
Grothendieck group of coherent sheaves on $Y$.  All the
cohomologically trivial maps encountered in this paper are also
proper.  It would be interesting to know if cohomological triviality
implies properness.

An irreducible complex variety $X$ has {\em rational singularities\/}
if there exists a cohomologically trivial resolution of singularities
$\pi : \wt X \to X$, i.e.\ $\wt X$ is a non-singular variety and $\pi$
is a proper birational and cohomologically trivial morphism.  If $X$
has rational singularities, then $X$ is normal, and all resolutions of
singularities of $X$ are cohomologically trivial.

An irreducible variety $X$ is {\em rationally connected\/} if a
general pair of points $(x,y) \in X \times X$ can be joined by a
rational curve, i.e.\ both $x$ and $y$ belong to the image of some
morphism $\P^1 \to X$.  The following result provides a sufficient
condition for cohomological triviality.  It was proved in
\cite[Thm.~3.1]{buch.mihalcea:quantum} as an application of a theorem
of Koll\'ar \cite{kollar:higher}, see also
\cite[Prop.~5.2]{buch.chaput.ea:finiteness}.

\begin{prop}\label{prop:gysin}
  Let $f: X \to Y$ be a surjective morphism between complex projective
  varieties with rational singularities.  Assume that the general
  fibers of $f$ are rationally connected.  Then $f$ is cohomologically
  trivial.
\end{prop}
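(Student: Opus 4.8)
The plan is to reduce to the case that $X$ is smooth, using that $X$ has rational singularities, and then to apply Kollár's theorem on higher direct images. Choose a resolution of singularities $\pi : \wt X \to X$. Since $X$ has rational singularities we have $R\pi_* \cO_{\wt X} = \cO_X$ in the derived category, so the composite $g := f \circ \pi : \wt X \to Y$ satisfies $R g_* \cO_{\wt X} = Rf_*(R\pi_* \cO_{\wt X}) = Rf_* \cO_X$. It therefore suffices to prove that $g$ is cohomologically trivial, i.e.\ that $g_* \cO_{\wt X} = \cO_Y$ and $R^i g_* \cO_{\wt X} = 0$ for $i>0$.

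For the first equality, note that a rationally connected variety is connected, so the general fiber of $f$ is connected; since $X$ is normal the birational morphism $\pi$ has connected fibers, hence the general fiber of $g$ is connected as well. By Stein factorization $g$ factors as $\wt X \to Y' \to Y$ with $Y' \to Y$ finite and birational, and since a variety with rational singularities is normal this forces $Y' = Y$, that is $g_* \cO_{\wt X} = \cO_Y$; in particular every fiber of $g$ is connected. Next I claim the general fiber of $g$ is rationally connected. By generic smoothness the general fiber $G_y := g^{-1}(y)$ is smooth, hence irreducible, being also connected. Moreover $\pi$ is an isomorphism over the dense open set $X \ssm X_{\mathrm{sing}}$, and a general fiber $F_y := f^{-1}(y)$ is not contained in $X_{\mathrm{sing}}$, so the induced surjection $G_y = \pi^{-1}(F_y) \to F_y$ is birational; since $F_y$ is rationally connected and rational connectedness passes to a smooth projective birational model (a rational curve through two general points of $F_y$ lifts to its strict transform in $G_y$), the variety $G_y$ is rationally connected.

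Now $G_y$ is a smooth projective rationally connected variety, so $H^i(G_y, \cO_{G_y}) = 0$ for all $i>0$. Over a dense open $V \subseteq Y$ the morphism $g$ is smooth with rationally connected fibers, so by cohomology and base change $R^i g_* \cO_{\wt X}|_V = 0$ for $i>0$, i.e.\ $R^i g_* \cO_{\wt X}$ is a torsion sheaf. The content of Kollár's theorem \cite{kollar:higher}, as used in \cite[Thm.~3.1]{buch.mihalcea:quantum}, is precisely that this torsion must vanish: the higher direct images of dualizing sheaves of a smooth projective variety under a projective morphism are torsion-free, and passing from $\omega$ to $\cO$ along the fibers promotes the generic vanishing above to $R^i g_* \cO_{\wt X} = 0$ for all $i>0$. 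Together with $g_* \cO_{\wt X} = \cO_Y$ this shows that $g$, and hence $f$, is cohomologically trivial. I expect this last step — upgrading the easy generic vanishing to global vanishing — to be the crux, whereas the reduction to smooth $X$ and the rational connectedness of the fibers of $g$ are routine.
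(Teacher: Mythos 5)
Your argument is correct and is essentially the proof the paper relies on: Proposition~\ref{prop:gysin} is quoted from \cite[Thm.~3.1]{buch.mihalcea:quantum}, whose proof likewise reduces to a resolution $\wt X$ of $X$ using rational singularities, gets $g_*\cO_{\wt X}=\cO_Y$ from Stein factorization and normality of $Y$, notes that the general (smooth, rationally connected) fiber has vanishing higher cohomology of its structure sheaf, and then invokes Koll\'ar's theorem \cite{kollar:higher} to conclude $R^i g_*\cO_{\wt X}=0$ for $i>0$. Your gloss on how Koll\'ar's torsion-freeness yields the vanishing is a bit loose, but since you (like the paper) are citing Koll\'ar's theorem as the black-box crux rather than reproving it, this is fine.
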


To establish that a variety is rationally connected, the following
result from \cite{graber.harris.ea:families} is invaluable.

\begin{prop}[Graber, Harris, Starr]\label{prop:ratconn}
  Let $f : X \to Y$ be any dominant morphism of complete irreducible
  complex varieties.  If $Y$ and the general fibers of $f$ are
  rationally connected, then $X$ is rationally connected.
\end{prop}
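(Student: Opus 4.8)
The plan is to reduce the statement to its essential special case, in which the base $Y$ is a curve; that case is the geometric core of the Graber--Harris--Starr theorem \cite{graber.harris.ea:families}, and the general statement follows from it by a formal argument. Since everything is over $\C$, resolution of singularities lets me assume that $X$ and $Y$ are smooth and projective: rational connectedness is a birational invariant of smooth projective varieties, and the hypothesis that the general fiber of $f$ is rationally connected survives after replacing $Y$ by a resolution $\wt Y$ and $X$ by a resolution of the component of $X \times_Y \wt Y$ dominating $\wt Y$. I will also freely use the standard equivalences, valid in characteristic zero: for a smooth projective variety, rational connectedness is the same as ``a general pair of points is joined by a single rational curve'', as ``a general pair of points is joined by a connected chain of rational curves'', and as ``there is a very free rational curve''.

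First I would fix general points $x_1, x_2 \in X$ and put $y_i = f(x_i)$, which are general points of $Y$ since $f$ is dominant; in particular $x_i$ lies in a smooth fiber of $f$ over a point $y_i$ in the dense open $U \subseteq Y$ above which $f$ is smooth with rationally connected fibers. Because $Y$ is rationally connected, there is a morphism $g : \P^1 \to Y$ with $g(0) = y_1$ and $g(\infty) = y_2$, and a general such $g$ has image meeting $U$. Pulling back $f$ along $g$ and resolving, I obtain a morphism $f' : X' \to \P^1$ of smooth projective varieties, together with a map $X' \to X$. Since the locus of $y \in Y$ with $X_y$ rationally connected is constructible and dense, the generic point of $g(\P^1)$ maps into it, so the generic fiber of $f'$ over $\Spec \C(\P^1)$ is geometrically rationally connected.

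Now the Graber--Harris--Starr section theorem applies to $f' : X' \to \P^1$ and produces a section; moreover, since $x_1$ and $x_2$ lie in smooth fibers, the pointed version of the theorem yields a section $s : \P^1 \to X'$ that maps to $x_1$ over $0$ and to $x_2$ over $\infty$ under $X' \to X$. (Alternatively, without the pointed statement one takes any section, which gives a rational curve $C_0 \subseteq X$ meeting the fibers $X_{y_1}$ and $X_{y_2}$, joins $x_i$ to $C_0 \cap X_{y_i}$ inside the rationally connected fiber $X_{y_i}$, and smooths the resulting free chain of rational curves.) Composing $s$ with $X' \to X$ yields a rational curve in $X$ through $x_1$ and $x_2$, proving $X$ rationally connected. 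The main obstacle is precisely the curve case used as a black box here: proving that a family over $\P^1$ with rationally connected generic fiber admits a section. This is the substantive content of \cite{graber.harris.ea:families}; it is obtained through a bend-and-break analysis of the space of sections together with the deformation theory of rational curves, and by comparison the reduction above and the smoothing of chains of free rational curves in characteristic zero are routine.
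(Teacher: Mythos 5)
The paper offers no proof of this proposition: it is quoted directly from Graber--Harris--Starr \cite{graber.harris.ea:families}, where it appears as a corollary of their main section theorem, so there is no internal argument to compare yours against. What you have written is essentially the deduction given in that reference: connect the images of two general points of $X$ by a rational curve in $Y$, pull the family back to $\P^1$, apply the section theorem (or its pointed refinement), and close up inside the rationally connected fibers; as a sketch this is correct, and you rightly flag that all of the substance lives in the black box of producing sections of families over a curve with rationally connected general fiber. Two small remarks. First, the detour through constructibility of the rationally connected locus is unnecessary: since $y_1$ already lies in the dense open set $U$ over which $f$ is smooth with rationally connected fibers, the curve $g(\P^1)$ meets $U$, so the general closed fiber of $X' \to \P^1$ is rationally connected, which is exactly the hypothesis of the section theorem. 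Second, in your alternative route avoiding the pointed statement, the final step --- smoothing the chain, or equivalently passing from ``general pairs of points are joined by chains of rational curves'' to rational connectedness for a smooth projective variety in characteristic zero --- is itself a theorem of Koll\'ar--Miyaoka--Mori rather than a routine verification (one needs, e.g., that very free curves in a smooth fiber are free in the total space, or the general RCC $\Rightarrow$ RC implication), so it deserves an explicit citation; with that attribution your reduction is complete modulo the Graber--Harris--Starr section theorem, exactly as in their paper.
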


We need the following statement about cohomological triviality of
compositions, which will be applied in both directions.

\begin{lemma}\label{lemma:composite}
  Let $f : X \to Y$ be a cohomologically trivial morphism, and let
  $g : Y \to Z$ be any morphism of schemes.  Then $g$ is
  cohomologically trivial if and only if $gf$ is cohomologically
  trivial.
\end{lemma}
\begin{proof}
  The Grothendieck spectral sequence shows that
  $R^i g_* \cO_Y = R^i (gf)_* \cO_X$ for all $i \geq 0$.
\end{proof}

\section{Intersections of Schubert varieties}\label{sec:intschub}

In this section we fix our notation for Schubert varieties and state
some results.  Let $X = G/P$ be a homogeneous space defined by a
reductive complex linear algebraic group $G$ and a parabolic subgroup
$P$.  An irreducible closed subvariety $\Omega \subset X$ is called a
{\em Schubert variety\/} if there exists a Borel subgroup
$B \subset G$ such that $\Omega$ is $B$-stable, i.e.\
$B.\Omega = \Omega$.

Let $Q \subset P$ be a parabolic subgroup contained in $P$ and
consider the projection $\pi : G/Q \to G/P$.  We need the following
result.

\begin{prop}\label{prop:projschub}
  Let $\Omega$ be any Schubert variety in $G/Q$.  Then for any
  $x \in G/P$, the fiber $\pi^{-1}(x)$ is a homogeneous space for a
  conjugate of a Levi subgroup of $P$.  Furthermore, the intersection
  $\Omega \cap \pi^{-1}(x)$ is a Schubert variety in $\pi^{-1}(x)$ for
  all points $x$ in a dense open subset of $\pi(\Omega)$.
\end{prop}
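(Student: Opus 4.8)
The plan is to work $B$-equivariantly, where $B \subset Q$ is a fixed Borel subgroup. Write $W$, $W_Q$, $W_P$ for the Weyl groups of $G$, $Q$, $P$, and recall that the $B$-orbits on $G/Q$ are indexed by minimal-length coset representatives in $W^Q := W/W_Q$, with the Schubert varieties being the closures. Fix a Schubert variety $\Omega = \overline{B w Q/Q}$ in $G/Q$ for a suitable $w \in W^Q$. The fiber $\pi^{-1}(x)$ over a point $x = gP/P$ is isomorphic to $P/Q$, which is itself a (possibly non-reductive, but that is harmless here) homogeneous space for $P$; after choosing $g$ appropriately in a given $B$-orbit of $\pi(\Omega)$ we may take $x = vP/P$ for $v \in W^P$ with $vP/P \in \pi(\Omega)$, and then $\pi^{-1}(x) \cong \dot v (P/Q)$. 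The key point is that $\Omega \cap \pi^{-1}(x)$ will be $(B \cap \dot v P \dot v^{-1})$-stable, and one wants this to be a Schubert variety in $\pi^{-1}(x) \cong P/Q$ for the Borel subgroup $\dot v B_P \dot v^{-1}$ of $P$ (transported to the fiber), at least for $x$ in a dense open subset of $\pi(\Omega)$.

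First I would reduce to understanding the generic fiber of $\pi|_\Omega : \Omega \to \pi(\Omega)$. Since $\pi(\Omega)$ is itself a Schubert variety in $G/P$ — say $\pi(\Omega) = \overline{B u P/P}$ where $u \in W^P$ is the image of $w$ — the open cell $BuP/P$ is a single $B$-orbit, and over it the map $\pi|_\Omega$ is $B$-equivariant with all fibers isomorphic as varieties. So it suffices to analyze the fiber over the base point $uP/P$. Using a representative $\dot u$, this fiber is $\Omega \cap \dot u(P/Q)$, and it is stable under the stabilizer in $B$ of $uP/P$, namely $B \cap \dot u P \dot u^{-1}$, which is a Borel subgroup of $\dot u P \dot u^{-1}$. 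Conjugating by $\dot u^{-1}$ identifies $\dot u (P/Q)$ with $P/Q$ and $B \cap \dot u P \dot u^{-1}$ with a Borel subgroup $B'$ of $P$. Thus $\Omega \cap \pi^{-1}(uP/P)$ corresponds to a $B'$-stable closed subvariety of $P/Q$; the remaining issue is irreducibility.

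The main obstacle is irreducibility of the generic fiber: a $B'$-stable closed subvariety of $P/Q$ is a union of Schubert varieties, and one must show that over a dense open subset of $\pi(\Omega)$ it is a single one. For this I would use a dimension/generic-smoothness argument: $\Omega$ is irreducible, $\pi|_\Omega$ is surjective onto the irreducible $\pi(\Omega)$, so over a dense open subset $U \subset \pi(\Omega)$ all fibers are equidimensional of dimension $\dim \Omega - \dim \pi(\Omega)$, and — shrinking $U$ — we may assume $\pi|_{\pi^{-1}(U) \cap \Omega}$ is flat with geometrically reduced fibers (using that the total space is reduced and working in characteristic zero). A $B'$-stable equidimensional reduced closed subscheme of $P/Q$ is a union of Schubert varieties of a common dimension; if there were two or more components $Z_1, Z_2$, then over $U$ we would get a decomposition of $\pi^{-1}(U)\cap \Omega$ contradicting its irreducibility — more precisely, the locus in $U$ over which the fiber contains a given Schubert component $Z_i$ is constructible, hence the generic fiber has a well-defined component structure, and since the total space $\pi^{-1}(U)\cap\Omega$ is irreducible and dominates $U$, its generic fiber must be irreducible, hence a single Schubert variety. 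Intersecting $U$ with $BuP/P$ gives the desired dense open subset of $\pi(\Omega)$, and by $B$-equivariance over $BuP/P$ the same conclusion holds at every point of this open set, with the Schubert structure transported along the $B$-action. This completes the argument.
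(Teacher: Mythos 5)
Your reduction to the fiber over the $T$-fixed point $uP/P$ of the open cell of $\pi(\Omega)$ is fine, but the key group-theoretic claim is wrong: the stabilizer $B \cap \dot u P \dot u^{-1}$ is \emph{not} a Borel subgroup of $\dot u P \dot u^{-1}$ once $\ell(u)>0$. Indeed, the orbit $B\cdot uP/P$ has dimension $\ell(u)$, so this stabilizer has codimension $\ell(u)$ in $B$, whereas every Borel subgroup of $P$ (hence of $\dot u P\dot u^{-1}$) has the same dimension as $B$, since $B$ itself is one. Consequently, after conjugating by $\dot u^{-1}$ you do not get a Borel subgroup $B'$ of $P$, and the assertion that a closed subvariety of $P/Q$ stable under your group is a union of Schubert varieties is unjustified: for a too-small solvable subgroup containing $T$ there are far more stable subvarieties than unions of Schubert varieties. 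The missing ingredient --- which is exactly the content of the paper's argument --- is the Levi decomposition: writing $P=LU$ with respect to a maximal torus in $B\cap Q$, one has $U\subset Q$, so the fiber $P/Q=L/(L\cap Q)$ is a homogeneous space for the Levi $L$, and a root-system computation shows that $B\cap L$ (in your normalization, $B\cap \dot u L\dot u^{-1}$) \emph{is} a Borel subgroup of the Levi. Your stabilizer does contain this Borel of the Levi, so the conclusion can be repaired, but the proposal as written replaces this step by a false statement.

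The irreducibility step also contains an invalid inference: from ``$\pi^{-1}(U)\cap\Omega$ is irreducible and dominates $U$'' one cannot conclude that the generic fiber is irreducible --- the squaring map $\mathbb{A}^1\to\mathbb{A}^1$, or any generically $2\!:\!1$ cover, is a counterexample, and constructibility of the ``component loci'' does not rule this out. The paper closes this gap differently: every fiber is a union of Schubert varieties in $L/(L\cap Q)$, all of which contain the minimal $B'$-fixed point, hence every fiber is connected; then Brion's lemma (\cite[Lemma~3]{brion:positivity}) shows that the general fibers of $\pi:\Omega\to\pi(\Omega)$ have rational singularities because $\Omega$ does, hence are normal, and connected plus normal gives irreducible. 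Your generic flatness and reducedness reductions supply neither normality nor any control of monodromy among fiber components. A repair along your own lines is possible --- over the open cell $BuP/P$ the family is $B$-equivariant and Zariski-locally trivial, so the subsets swept out by the individual Schubert components of the fiber over $uP/P$ are closed in $\pi^{-1}(BuP/P)\cap\Omega$, and irreducibility of the latter forces a single component --- but that argument is not in your proposal, so as written there are genuine gaps at both points.
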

\begin{proof}
  We may assume that $\Omega = \ov{B.Q}$ and $\pi(\Omega) = \ov{B.P}$
  for some Borel subgroup $B \subset G$.  Since the dense open orbit
  $B.P \subset \pi(\Omega)$ is a principal homogeneous space for a
  subgroup of $B$, it follows that the restricted map
  $\pi : \Omega \to \pi(\Omega)$ can be trivialized over this orbit,
  i.e.\ there is an isomorphism
  $\pi^{-1}(B.P) \cap \Omega \cong B.P \times F$ with
  $F = \pi^{-1}(1.P) \cap \Omega$, such that the map
  $\pi^{-1}(B.P) \cap \Omega \to B.P$ is the projection to the first
  factor (see also \cite[Prop.~2.3]{buch.chaput.ea:finiteness}).  It
  follows that $F$ is irreducible, and it suffices to show that $F$ is
  a Schubert variety in $\pi^{-1}(1.P) = P/Q$.  Choose a maximal torus
  $T$ in $G$ such that $T \subset B \cap Q$, and let $P = LU$ be the
  Levi decomposition of $P$ with respect to $T$, i.e.\ $L$ is a
  (reductive) Levi subgroup containing $T$ and $U$ is the unipotent
  radical.  Since $U \subset Q$ it follows that
  $\pi^{-1}(1.P) = L/(L\cap Q)$ is a homogeneous space for $L$.  By
  using the root space decomposition of $\Lie(G)$ it follows that
  $B' = B \cap L$ is a Borel subgroup of $L$.  The proposition now
  follows because $F$ is a $B'$-stable closed irreducible subvariety
  of $\pi^{-1}(1.P)$.
\end{proof}

From now on we will fix a Borel subgroup $B$ and a maximal torus $T$
such that $T \subset B \subset P \subset G$.  Let $W = N_G(T)/T$ be
the Weyl group of $G$, let $W_P = N_P(T)/T \subset W$ be the Weyl
group of $P$, and let $W^P \subset W$ be the subset of minimal
representatives for the cosets in $W/W_P$.  Each element $u \in W$
defines a Schubert variety $X_u = \overline{Bu.P}$ and an opposite
Schubert variety $X^u = \overline{B^-u.P}$ in $X$.  Here
$B^- \subset G$ is the Borel subgroup opposite to $B$.  For
$u \in W^P$ we have $\dim(X_u) = \codim(X^u,X) = \ell(u)$, where
$\ell(u)$ denotes the length of $u$.  Any non-empty intersection of
the form $X_u \cap X^v$ with $u,v \in W$ is called a {\em Richardson
  variety}.  Richardson varieties are known to be rational
\cite{richardson:intersections}, and they have rational singularities
\cite{brion:positivity}.

\begin{prop}\label{prop:degrich}
  Let $u,v \in W^P$ be such that $X_u \cap X^v \neq \emptyset$.  Then
  $X_u \cap g.X^v$ is connected for all $g \in G$.
\end{prop}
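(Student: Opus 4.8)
The plan is to exhibit $X_u \cap g.X^v$ as the fiber over $g$ of a proper morphism from an \emph{irreducible} variety onto $G$, and to deduce connectedness of every fiber from connectedness of the generic fiber together with the normality of $G$. Concretely, I would form the incidence variety
\[
\cZ \ = \ \{(x,g) \in X \times G \ : \ x \in X_u \ \text{ and } \ g^{-1}.x \in X^v\} ,
\]
which is closed in $X \times G$ since the condition $g^{-1}.x \in X^v$ describes the preimage of the closed set $X^v$ under the morphism $(x,g) \mapsto g^{-1}.x$. Hence the second projection $\phi : \cZ \to G$ is proper, and its fiber over $g$ is $\phi^{-1}(g) = (X_u \cap g.X^v) \times \{g\}$. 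So it suffices to prove that all fibers of $\phi$ are non-empty and connected.

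Next I would check that $\cZ$ is irreducible. The morphism $\overline{Bu.P} \times \overline{B^-v.P} \to X \times G$ given by $(h,k) \mapsto (h.P,\, h k^{-1})$ takes values in $\cZ$ and is surjective onto $\cZ$: given $(x,g) \in \cZ$, pick $h \in \overline{Bu.P}$ with $h.P = x$ and set $k = g^{-1}h$, which lies in $\overline{B^-v.P}$ because $g^{-1}.x \in X^v$, and then $(h,k)$ maps to $(x,g)$. Since $\overline{Bu.P}$ and $\overline{B^-v.P}$ are irreducible, so is $\cZ$.

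The decisive geometric input comes from the big cell. The subset $\Omega := B \cdot B^-$ is open and dense in $G$ (it is the $(B\times B^-)$-orbit of the identity, of dimension $\dim B + \dim B^- - \dim T = \dim G$). For $g = b\beta$ with $b \in B$ and $\beta \in B^-$ we have $g.X^v = b\beta.X^v = b.X^v$ because $X^v$ is $B^-$-stable, and therefore
\[
X_u \cap g.X^v \ = \ X_u \cap b.X^v \ = \ b.\bigl(b^{-1}.X_u \cap X^v\bigr) \ = \ b.\bigl(X_u \cap X^v\bigr)
\]
because $X_u$ is $B$-stable. Since $X_u \cap X^v$ is a non-empty Richardson variety, it is irreducible, so $\phi^{-1}(g)$ is irreducible, in particular connected, for every $g \in \Omega$. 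In particular $\phi$ is surjective (its image is closed and contains the dense set $\Omega$), all its fibers are non-empty, and its generic geometric fiber is connected and, in characteristic $0$, reduced.

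I would finish with the Stein factorization $\phi = q \circ p$, where $p : \cZ \to \cZ'$ has geometrically connected fibers and $q : \cZ' \to G$ is finite; here $\cZ'$ is irreducible because $\cZ$ is. Then $\phi_*\cO_\cZ$ is a finite $\cO_G$-algebra whose generic rank is $1$ (the generic geometric fiber of $\phi$ being connected and reduced), so it is integral over $\cO_G$ and generically of rank one; since $G$ is smooth, hence normal, this forces $\phi_*\cO_\cZ = \cO_G$, so $q$ is an isomorphism and $p = \phi$. Thus every fiber of $\phi$ is connected, which is precisely the statement that $X_u \cap g.X^v$ is connected for all $g \in G$. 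The step I expect to be the genuine obstacle is exactly this last one, passing from the generic fiber to an arbitrary fiber: for special $g$ the intersection $X_u \cap g.X^v$ may be complicated, so there is no direct fiberwise argument, and one must combine the irreducibility of $\cZ$ with the normality of $G$; the only truly geometric ingredient is the degeneration over $\Omega$ to a translate of a Richardson variety.
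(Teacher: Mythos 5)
Your proof is correct and follows essentially the same route as the paper's (which in turn follows Brion): realize the intersections $X_u \cap g.X^v$ as the fibers of a proper morphism from an irreducible incidence variety onto $G$, observe that over the dense open subset $BB^-$ the fibers are translates of the non-empty Richardson variety $X_u \cap X^v$ and hence irreducible, and conclude that all fibers are connected via Stein factorization and Zariski's Main Theorem using normality of $G$. The only cosmetic difference is the irreducibility step, where you exhibit the incidence variety as the image of the product of the preimages of $X_u$ and $X^v$ in $G$, while the paper deduces it from the local triviality of the action map $G \times X^v \to X$.
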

\begin{proof}
  We follow Brion's proof of \cite[Lemma~2]{brion:positivity}.  Let
  $G$ act on $G \times X^v$ by left multiplication on the first
  factor.  Then the multiplication map $m : G \times X^v \to X$
  defined by $m(g,x) = g.x$ is $G$-equivariant.  It follows that $m$
  is a locally trivial fibration, see
  \cite[Prop.~2.3]{buch.chaput.ea:finiteness}.  We deduce that
  $Z = m^{-1}(X_u) = X_u \times_X (G \times X^v)$ is an irreducible
  variety.  Let $\pi : Z \to G$ be the projection.  For $g \in G$ we
  then have $\pi^{-1}(g) = X_u \cap g.X^v$.  Since this is a translate
  of a Richardson variety for all elements $g$ in a dense open subset
  of $G$, it follows that the general fibers of $\pi$ are connected.
  By Zariski's Main Theorem and Stein Factorization, this implies that
  all fibers of $\pi$ are connected, as required.
\end{proof}

Set $F = G/B$ and let $\rho : F \to X$ be the projection.  If $R$ is
any Richardson variety in $F$, then the image $\rho(R)$ is called a
{\em projected Richardson variety}.  We need the following result
which was proved in the papers \cite{billey.coskun:singularities,
  knutson.lam.ea:projections}.

\begin{prop}[\cite{billey.coskun:singularities,
    knutson.lam.ea:projections}]\label{prop:projrich}
  Let $R \subset G/B$ be a Richardson variety and let
  $\rho(R) \subset G/P$ be the corresponding projected Richardson
  variety.\smallskip

  \noin{\rm(a)} The variety $\rho(R)$ is Cohen-Macaulay and has
  rational singularities.\smallskip

  \noin{\rm(b)} The restricted map $\rho : R \to \rho(R)$ is
  cohomologically trivial.
\end{prop}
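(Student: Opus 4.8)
My plan is to reduce both parts to two facts: \textbf{(A)} the projected Richardson variety $\rho(R)$ has rational singularities, and \textbf{(B)} the general fibers of the restricted map $\rho : R \to \rho(R)$ are rationally connected. Granting these, part (b) follows at once from Proposition~\ref{prop:gysin}, applied to $\rho : R \to \rho(R)$: this is a surjective morphism of complex projective varieties, the source $R$ has rational singularities because it is a Richardson variety, the target $\rho(R)$ has rational singularities by (A), and the general fibers are rationally connected by (B). Part (a) then follows because a variety with rational singularities is Cohen--Macaulay. So everything reduces to (A) and (B).

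For (B), fix a general point $x \in \rho(R)$ and identify $\rho^{-1}(x)$ with $P/B \cong L/(L\cap B)$, where $L$ is a Levi subgroup of $P$ containing $T$. Applying Proposition~\ref{prop:projschub} to the $B$-stable Schubert variety $X_u$, and again to the $B^-$-stable Schubert variety $X^v$ (running its proof with $B$ and $B^-$ interchanged), shows that for $x$ in a dense open subset of $\rho(R)$ the subvarieties $X_u \cap \rho^{-1}(x)$ and $X^v \cap \rho^{-1}(x)$ are Schubert varieties in $P/B$ for the Borel subgroups $L\cap B$ and $L\cap B^-$ of $L$, respectively. These two Borel subgroups of $L$ are opposite, since the sets of positive roots they determine are $\Phi^+\cap\Phi_L$ and $\Phi^-\cap\Phi_L$, where $\Phi_L$ is the root system of $L$. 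Hence $R\cap\rho^{-1}(x)$ is an intersection of a Schubert variety with an opposite Schubert variety in $P/B$, i.e.\ a Richardson variety, and Richardson varieties are rational, hence rationally connected. The point needing care here is that the dense open subsets of $\rho(X_u)$ and $\rho(X^v)$ supplied by Proposition~\ref{prop:projschub} meet $\rho(R)$ in dense open subsets; this can be arranged using the $T$-equivariance of $\rho|_R$ and a dimension count, or by a direct analysis of the bad loci.

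Fact (A) is the heart of the matter; it is precisely the content of the cited theorems of Billey--Coskun and Knutson--Lam--Speyer, and I would reprove it along one of their lines. One route is to factor $\rho$ through a chain of parabolics $B = Q_0 \subsetneq Q_1 \subsetneq \dots \subsetneq Q_n = P$ with each $Q_i/Q_{i-1}$ one-dimensional, so that each $\rho_i : G/Q_{i-1}\to G/Q_i$ is a locally trivial $\P^1$-bundle, and to propagate the conclusion along a class of ``generalized Richardson varieties'' closed under such projections. Under $\rho_i$, a generalized Richardson variety $Z$ either is a union of fibers --- in which case $\rho_i : Z\to\rho_i(Z)$ is again a locally trivial $\P^1$-bundle, so $\rho_i(Z)$ inherits rational singularities from $Z$ because that property descends along smooth surjections --- or meets the general fiber in a single point, in which case $\rho_i|_Z$ is birational onto its image. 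The main obstacle lies in this last, birational, case: one must show directly that the image is again a variety with rational singularities, since this is not inherited from $Z$ by the birational map alone; this is exactly where the combinatorial analysis of images of generalized Richardson varieties (a bookkeeping with minimal coset representatives for the simple reflection $s_\al$ attached to $Q_{i-1}\subset Q_i$) is needed. Once both $Z$ and $\rho_i(Z)$ are known to have rational singularities, cohomological triviality of $\rho_i|_Z$ follows by comparing a resolution of $Z$ with the resulting resolution of $\rho_i(Z)$ through the Grothendieck spectral sequence, and composing the steps (Lemma~\ref{lem:composite}) gives a second proof of (b). The cleanest alternative is to obtain (A) by Frobenius splitting: a Frobenius splitting of $G/B$ compatible with all Richardson varieties descends to $G/P$ and is compatible with every projected Richardson variety, and, refined to a splitting along an ample divisor, it forces $\rho(R)$ to have rational singularities.
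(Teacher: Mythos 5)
First, note that the paper does not prove this proposition at all: it is imported verbatim from \cite{billey.coskun:singularities, knutson.lam.ea:projections}. So deferring fact (A) (rational singularities of $\rho(R)$) to those papers is consistent with what the paper does, and your reduction ``rational singularities $\Rightarrow$ Cohen--Macaulay'' for (a) is fine. But your sketch of (A) itself stops exactly at the step you identify as the main obstacle (the birational case in the $\P^1$-bundle factorization, resp.\ the passage from Frobenius splitting to rational singularities in characteristic zero), so as a proof it is a citation plus an outline, not an argument.

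The genuine gap is in your derivation of (b), i.e.\ in fact (B). You apply Proposition~\ref{prop:projschub} to $X_u$ and $X^v$, but that proposition only gives irreducibility of $X_u \cap \rho^{-1}(x)$ for $x$ in a dense open subset of $\rho(X_u)$ (and likewise for $X^v$ over $\rho(X^v)$). A projected Richardson variety $\rho(R)$ is in general a \emph{proper} closed subvariety of $\rho(X_u)\cap\rho(X^v)$ (this is exactly why projected Richardson varieties form a strictly larger class than Richardson varieties in $G/P$), and it is not $B$- or $B^-$-stable, so it may well be contained in the bad loci; ``$T$-equivariance and a dimension count'' is not an argument that the good loci meet $\rho(R)$. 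Worse, the identification of the relevant Borel subgroups as $L\cap B$ and $L\cap B^-$ is only correct when the translation taking $x=g.P$ to $1.P$ can be chosen to normalize $T$, i.e.\ essentially when $x$ is a $T$-fixed point. For a general $x$, running the proof of Proposition~\ref{prop:projschub} for $X_u$ and for $X^v$ produces Borel subgroups of two possibly different Levi subgroups, coming from $g^{-1}Bg\cap P$ and $g^{-1}B^-g\cap P$, and these need not be in opposite position: their relative position in $L$ varies with $x$ and does degenerate on proper subvarieties (already for $G=\SL_3$, $G/P=\P^2$ one can choose $x$ so that the two induced Borels of the Levi coincide). Since $x$ is constrained to the possibly small variety $\rho(R)$, you cannot appeal to genericity in $G/P$ to rule this out. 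Hence your claim that the general fiber of $\rho:R\to\rho(R)$ is a Richardson variety in the fiber---and therefore rational, which is what Proposition~\ref{prop:gysin} needs---is not established; without it you do not even get irreducibility or connectedness of the general fiber. The structure of these fibers is in fact part of the content of \cite{knutson.lam.ea:projections} (cf.\ the use of \cite[Prop.~3.3]{knutson.lam.ea:projections} in Remark~\ref{rem:codim}), so the honest options are to cite (b) as the paper does, or to genuinely reproduce that analysis.
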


\section{Projected Gromov-Witten varieties}\label{sec:projgw}

Let $\Phi$ be the root system of $(G,T)$, with positive roots $\Phi^+$
and simple roots $\Delta \subset \Phi^+$.  In the rest of this paper
we will assume that $X = G/P$ is a {\em cominuscule variety}.  This
means that $P$ is a maximal parabolic subgroup of $G$ corresponding to
a simple root $\ga \in \Delta$ such that $s_\ga \notin W_P$.  In
addition $\ga$ is a cominuscule simple root, i.e.\ when the highest
root in $\Phi^+$ is written as a linear combination of simple roots,
the coefficient of $\ga$ is one.  The collection of cominuscule
varieties consists of Grassmannians $\Gr(m,N)$ of type A, Lagrangian
Grassmannians $\LG(m,2m)$, maximal orthogonal Grassmannians
$\OG(m,2m)$, quadric hypersurfaces $Q^m$, and two exceptional
varieties called the Cayley plane $E_6/P_6$ and the Freudenthal
variety $E_7/P_7$.

We will identify the homology group $H_2(X;\Z)$ with the integers
$\Z$, so that the generator $[X_{s_\ga}] \in H_2(X;\Z)$ corresponds to
$1 \in \Z$.  Given a non-negative degree $d \in \Z$ and a positive
integer $n$, the Kontsevich moduli space $\Mb_{0,n}(X,d)$ parametrizes
the isomorphism classes of $n$-pointed stable (genus zero) maps
$f : C \to X$ for which $f_*[C] = d$, and comes with an evaluation map
$\ev = (\ev_1,\dots,\ev_n) : \Mb_{0,n}(X,d) \to X^n := X \times \dots
\times X$.
A detailed construction of this space can be found in the survey
\cite{fulton.pandharipande:notes}.  For the special case $n=3$ we
write simply $M_d = \Mb_{0,3}(X,d)$.  Given two closed subvarieties
$\Omega_1$ and $\Omega_2$ in $X$, define the {\em Gromov-Witten
  variety\/}
$M_d(\Omega_1, \Omega_2) = \ev_1^{-1}(\Omega_1) \cap
\ev_2^{-1}(\Omega_2) \subset M_d$.
The corresponding {\em projected Gromov-Witten variety\/} is defined
by
$\Gamma_d(\Omega_1,\Omega_2) = \ev_3(M_d(\Omega_1,\Omega_2)) \subset
X$.
It was proved in \cite{buch.chaput.ea:finiteness} that any
Gromov-Witten variety $M_d(X_u,X^v)$ defined by two opposite Schubert
varieties is either empty or unirational with rational singularities.
Our main result is the following theorem.

\begin{thm}\label{thm:main}
  Let $X$ be a cominuscule variety and let $X_u$ and $X^v$ be opposite
  Schubert varieties in $X$ such that
  $\Gamma_d(X_u,X^v) \neq \emptyset$.\smallskip

  \noin{\rm(a)} The projected Gromov-Witten variety
  $\Gamma_d(X_u,X^v)$ is a projected Richardson variety.  In
  particular, it is Cohen-Macaulay and has rational
  singularities.\smallskip

  \noin{\rm(b)} The restricted map
  $\ev_3 : M_d(X_u,X^v) \to \Gamma_d(X_u,X^v)$ is cohomologically
  trivial.
\end{thm}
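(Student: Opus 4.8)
The plan is to reduce both parts to the `quantum equals classical' construction for cominuscule varieties, which is recalled in Section~\ref{sec:qclas}. The key geometric fact is that a degree-$d$ rational curve in a cominuscule $X$ is controlled by a pair $(\text{Kernel}, \text{Span})$ that lives in a product of two (or three) smaller homogeneous spaces, and this data assembles into an auxiliary flag variety $Z$ together with a diagram
\[
\xymatrix{
& \wt Z \ar[dl] \ar[dr] & \\
X & & Z
}
\]
in which the left map has fibers that are rationally connected (indeed, homogeneous or towers of projective bundles) and the right map exhibits $\wt Z$ as a Richardson-type subvariety. For a pair of opposite Schubert varieties $X_u, X^v$, the curves meeting both are parametrized by a Richardson variety $R \subset Z$, and its preimage $\wt R$ in $\wt Z$ maps onto $\Gamma_d(X_u,X^v) \subset X$. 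First I would set up this correspondence precisely, pinning down which three-step (or two-step) flag manifold is involved in each cominuscule type; this is bookkeeping but must be done type by type.

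For part~(a), once $\Gamma_d(X_u,X^v)$ is identified as the image $\rho(\wt R)$ of a Richardson variety under a projection $\rho$ of homogeneous spaces, it is by definition a projected Richardson variety, and Proposition~\ref{prop:projrich}(a) gives that it is Cohen-Macaulay with rational singularities. The mild subtlety here is that $\wt R$ is a priori only the preimage of a Richardson variety under a fibration, not itself a Richardson variety; I would handle this using Proposition~\ref{prop:projschub} to see that $\wt R$ is cut out by Schubert conditions fiberwise, or else argue directly that the relevant preimage is again a Richardson variety in a larger flag bundle. Passing between $Z$ and $X$ through $\wt Z$ is legitimate precisely because the maps are proper morphisms of projective varieties with rational singularities.

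For part~(b), the strategy is to factor the evaluation map and apply Lemma~\ref{lem:composite}. The Gromov-Witten variety $M_d(X_u,X^v)$ admits a cohomologically trivial map to $\wt R$ (the `kernel-span' map; its fibers are rationally connected, so Proposition~\ref{prop:gysin} applies, using that $M_d(X_u,X^v)$ has rational singularities by \cite{buch.chaput.ea:finiteness}), and under this identification $\ev_3$ becomes the projection $\rho : \wt R \to \rho(\wt R) = \Gamma_d(X_u,X^v)$, which is cohomologically trivial by Proposition~\ref{prop:projrich}(b). Then Lemma~\ref{lem:composite} yields that $\ev_3$ itself is cohomologically trivial. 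The only case where this clean argument breaks is $X = E_6/P_6$ with $d = 3$: here the candidate $\wt Z$ fails to be homogeneous, so Proposition~\ref{prop:projrich} cannot be invoked. For that case I would instead prove directly, by analyzing the geometry of degree-$3$ curves on the Cayley plane, that the general fibers of $\ev_3 : M_3(X_u,X^v) \to \Gamma_3(X_u,X^v)$ are rationally connected (e.g.\ via a further fibration whose base and fibers are rationally connected, applying Proposition~\ref{prop:ratconn}), and then conclude by Proposition~\ref{prop:gysin}; this is deferred to Section~\ref{sec:cayley}.

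The main obstacle I anticipate is not any single estimate but the verification that the `quantum equals classical' diagram behaves well relative to the Richardson structure — i.e.\ that pulling back a Richardson variety from $Z$ and pushing it forward to $X$ really produces a projected Richardson variety and that all the intervening maps are genuinely cohomologically trivial (surjectivity, rational singularities, and rational connectedness of fibers all needing to be checked). The exceptional Cayley-plane case is a genuinely separate difficulty, since the homogeneity that powers the general argument is simply unavailable there.
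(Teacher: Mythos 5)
Your proposal is correct in outline and follows essentially the same route as the paper: identify $\Gamma_d(X_u,X^v)$ as the image under $p$ of the Richardson variety $Z_d(X_u,X^v)$ in the ``quantum equals classical'' incidence space, deduce part (a) and the cohomological triviality of $p$ from Proposition~\ref{prop:projrich}, obtain part (b) from Lemma~\ref{lem:composite} together with Proposition~\ref{prop:gysin}, and treat the Cayley plane with $d=3$ separately by proving that the general fibers of $\ev_3$ are rationally connected. Two details the paper handles that you gloss over: the kernel--span assignment is only a rational map on $M_d(X_u,X^v)$, so one must pass through the birational modification $\Bl_d(X_u,X^v)$ (with the maps $\phi$ and $e_3$, whose restricted general fibers are shown to be rational via Kleiman transversality and Proposition~\ref{prop:projschub}) and apply Lemma~\ref{lem:composite} in both directions, and part (a) for the Cayley plane with $d=3$ also falls outside the homogeneous construction and is settled by the separate observation (Lemma~\ref{lem:cayleyG3}) that $\Gamma_3(X_u,X^v)$ is then a translate of the Schubert divisor or all of $X$.
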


Let $K(X)$ denote the Grothendieck ring of algebraic vector bundles on
$X$.  We use the notation $\cO_u = [\cO_{X_u}]$ and
$\cO^u = [\cO_{X^u}]$ for the classes in $K(X)$ defined by the
structure sheaves of Schubert varieties.  The set
$\{ \cO_u \mid u \in W^P \} = \{\cO^u \mid u \in W^P \}$ is a
$\Z$-basis for $K(X)$.  Let $\cO_w^\vee$ denote the basis element dual
to $\cO^w$, in the sense that
$\euler{X}(\cO^u \cdot \cO_w^\vee) = \delta_{u,w}$ for all
$u,w \in W^P$.  Here $\euler{X} : K(X) \to \Z$ denotes the sheaf Euler
characteristic defined by
$\euler{X}(\cF) = \sum_{k \geq 0} (-1)^k \dim H^i(X;\cF)$.  Given
three classes $\sigma_1, \sigma_2, \sigma_3 \in K(X)$, define a
$K$-theoretic Gromov-Witten invariant by
\[
I_d(\sigma_1,\sigma_2,\sigma_3) = \euler{M_d}(\ev_1^*(\sigma_1) \cdot
\ev_2^*(\sigma_2) \cdot \ev_3^*(\sigma_3)) \,.
\]
\Theorem{main} has the following consequence.

\begin{cor}\label{cor:pgw_class}
  In the ring $K(X)$ we have
  \[
  [\cO_{\Gamma_d(X_u,X^v)}] = (\ev_3)_* [\cO_{M_d(X_u,X^v)}] =
  \sum_{w\in W^P} I_d(\cO_u,\cO^v,\cO_w^\vee)\, \cO^w \,.
  \]
\end{cor}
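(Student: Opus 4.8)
The plan is to derive Corollary~\ref{cor:pgw_class} from Theorem~\ref{thm:main}(b) together with the projection formula in $K$-theory. First I would observe that the first equality, $[\cO_{\Gamma_d(X_u,X^v)}] = (\ev_3)_* [\cO_{M_d(X_u,X^v)}]$, is immediate from Theorem~\ref{thm:main}(b): the restricted evaluation map $\ev_3 : M_d(X_u,X^v) \to \Gamma_d(X_u,X^v)$ is proper (being a morphism of projective varieties) and cohomologically trivial, so by the remark following the definition of cohomological triviality in Section~\ref{sec:ctriv} its pushforward sends the structure sheaf class to the structure sheaf class in $K_\circ(\Gamma_d(X_u,X^v))$; pushing forward further along the closed embedding $\Gamma_d(X_u,X^v) \hookrightarrow X$ then gives the stated identity in $K(X) = K_\circ(X)$ (using that $X$ is smooth, so $K$ and $K_\circ$ agree).

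Next I would expand $(\ev_3)_* [\cO_{M_d(X_u,X^v)}]$ in the Schubert basis. Since $\{\cO^w \mid w \in W^P\}$ is a $\Z$-basis of $K(X)$ and $\{\cO_w^\vee\}$ is its dual basis with respect to the pairing $(\alpha,\beta) \mapsto \euler{X}(\alpha \cdot \beta)$, we may write $(\ev_3)_* [\cO_{M_d(X_u,X^v)}] = \sum_{w \in W^P} c_w \, \cO^w$ with $c_w = \euler{X}\big( (\ev_3)_* [\cO_{M_d(X_u,X^v)}] \cdot \cO_w^\vee \big)$. By the projection formula for the proper map $\ev_3 : M_d \to X$, this equals $\euler{M_d}\big( [\cO_{M_d(X_u,X^v)}] \cdot \ev_3^*(\cO_w^\vee) \big)$, and the Euler characteristic over $M_d$ agrees with the one over $M_d(X_u,X^v)$ after the pushforward. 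Here I use the functoriality of $\euler{}$ under the structure map to a point together with $\euler{M_d} \circ (\text{inclusion})_* = \euler{M_d(X_u,X^v)}$.

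The remaining step is to identify $[\cO_{M_d(X_u,X^v)}] = \ev_1^*(\cO_u) \cdot \ev_2^*(\cO^v)$ in $K(M_d)$, so that $c_w = \euler{M_d}\big(\ev_1^*(\cO_u) \cdot \ev_2^*(\cO^v) \cdot \ev_3^*(\cO_w^\vee)\big) = I_d(\cO_u, \cO^v, \cO_w^\vee)$ by the definition of the $K$-theoretic Gromov-Witten invariant. This is where the geometric input beyond Theorem~\ref{thm:main} is needed: one must know that the evaluation maps $\ev_1$ and $\ev_2$ are flat (or at least that the intersection $\ev_1^{-1}(X_u) \cap \ev_2^{-1}(X^v)$ is proper of the expected codimension and the scheme-theoretic intersection is Cohen-Macaulay/reduced), so that the derived tensor product $\ev_1^*[\cO_{X_u}] \cdot \ev_2^*[\cO_{X^v}]$ in $K(M_d)$ is represented by the structure sheaf of $M_d(X_u,X^v)$ with no higher Tor contributions. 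This follows from the Kleiman-transversality-type results and the finiteness/rationality statements of \cite{buch.chaput.ea:finiteness}, which guarantee $M_d(X_u,X^v)$ has rational (in particular Cohen-Macaulay) singularities and the expected dimension; I would cite that paper for the needed transversality. I expect this Tor-vanishing/flatness verification to be the only genuine obstacle — the rest is formal manipulation of the projection formula and duality — and it is handled by general position arguments already available in the literature.
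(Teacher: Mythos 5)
Your argument is essentially the paper's own proof: the first equality follows from Theorem~\ref{thm:main}(b) together with properness (a proper cohomologically trivial map sends the structure sheaf class to the structure sheaf class in $K_\circ$), and the second follows from expanding in the dual basis, the projection formula, and the identity $[\cO_{M_d(X_u,X^v)}] = \ev_1^*(\cO_u)\cdot \ev_2^*(\cO^v)$ in $K(M_d)$. The one place where your justification is imprecise is exactly that identity, which you correctly isolate as the non-formal input. Flatness of the individual maps $\ev_1,\ev_2$ (each is indeed a locally trivial fibration, by $G$-equivariance and transitivity of $G$ on $X$) only gives $\ev_i^*(\cO_u) = [\cO_{\ev_i^{-1}(X_u)}]$; it does not control the higher Tor sheaves between $\ev_1^{-1}(X_u)$ and $\ev_2^{-1}(X^v)$, because the diagonal $G$-action on $X\times X$ is not transitive, so no naive Kleiman-type argument applies to the pair $(X_u\times X^v,\ \ev_1\times\ev_2)$. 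Likewise, ``expected codimension plus Cohen--Macaulay'' is not by itself enough: the standard lemma of that shape needs a smooth ambient variety, whereas $M_d$ is in general singular, and one must also argue that the scheme-theoretic intersection is reduced before identifying its class with $[\cO_{M_d(X_u,X^v)}]$; the rationality/rational-singularities results of \cite{buch.chaput.ea:finiteness} concern the reduced Gromov--Witten variety and do not yield this $K$-class statement on their own. The paper gets precisely the required Tor-vanishing and reducedness from Sierra's general homological Kleiman--Bertini theorem \cite[Thm.~2.2]{sierra:general}, as explained in \cite[\S4.1]{buch.mihalcea:quantum}; with that reference substituted for your transversality heuristic, your proof coincides with the paper's primary argument. (The paper also records an alternative, direct proof of the corollary via Theorem~\ref{thm:qclas} and Proposition~\ref{prop:projrich}, which bypasses Theorem~\ref{thm:main}(b) entirely, but that is not the route you took.)
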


We note that the second equality in this corollary is clear from the
definition of $K$-theoretic Gromov-Witten varieties.  In fact, we have
$[\cO_{M_d(X_u,X^v)}] = \ev_1^*(\cO_u) \cdot \ev_2^*(\cO^v)$ in
$K(M_d)$ by \cite[Thm.~2.2]{sierra:general} (see
\cite[\S4.1]{buch.mihalcea:quantum}), from which we deduce that
\[
\euler{X}((\ev_3)_*[\cO_{M_d(X_u,X^v)}] \cdot \cO_w^\vee) =
\euler{M_d}([\cO_{M_d(X_u,X^v)}] \cdot \ev_3^*(\cO_w^\vee)) =
I_d(\cO_u,\cO^v,\cO_w^\vee) \,,
\]
as required.

A theorem of Brion \cite{brion:positivity} states that, if a closed
irreducible subvariety of a homogeneous space has rational
singularities, then the expansion of its Grothendieck class in the
basis of Schubert structure sheaves has alternating signs.  This
combined with \Corollary{pgw_class} has the following consequence.

\begin{cor}\label{cor:pos}
  The $K$-theoretic Gromov-Witten invariants
  $I_d(\cO_u,\cO^v,\cO_w^\vee)$ have alternating signs in the sense
  that
  \[
  (-1)^{\ell(w)-\codim \Gamma_d(X_u,X^v)} \,
  I_d(\cO_u,\cO^v,\cO_w^\vee) \geq 0 \,.
  \]
\end{cor}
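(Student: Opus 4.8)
The plan is to derive this immediately from Corollary~\ref{cor:pgw_class} together with the theorem of Brion quoted just above. By Corollary~\ref{cor:pgw_class} we have the expansion
\[
[\cO_{\Gamma_d(X_u,X^v)}] \;=\; \sum_{w\in W^P} I_d(\cO_u,\cO^v,\cO_w^\vee)\, \cO^w
\]
in the Schubert basis $\{\cO^w \mid w\in W^P\}$ of $K(X)$. Hence the Gromov-Witten invariants $I_d(\cO_u,\cO^v,\cO_w^\vee)$ are precisely the structure constants expressing $[\cO_{\Gamma_d(X_u,X^v)}]$ in this basis, and it suffices to show that these structure constants exhibit the claimed alternating signs.

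The first step is to verify the hypotheses of Brion's positivity theorem for the subvariety $Y := \Gamma_d(X_u,X^v) \subset X$. It is closed in the homogeneous space $X$ by its very definition as $\ev_3(M_d(X_u,X^v))$; it is irreducible because $M_d(X_u,X^v)$ is non-empty by assumption and unirational, hence irreducible, by \cite{buch.chaput.ea:finiteness}; and it has rational singularities by Theorem~\ref{thm:main}(a). So Brion's theorem \cite{brion:positivity} applies to the class $[\cO_Y]$.

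The second step is to pin down the sign normalization. Since $\cO^w = [\cO_{X^w}]$ with $\codim(X^w,X) = \ell(w)$, Brion's theorem states that for a closed irreducible subvariety $Y \subset X$ with rational singularities, the expansion $[\cO_Y] = \sum_{w\in W^P} c_w\, \cO^w$ satisfies $(-1)^{\ell(w)-\codim(Y,X)}\, c_w \geq 0$ for every $w$. Substituting $c_w = I_d(\cO_u,\cO^v,\cO_w^\vee)$ and $\codim(Y,X) = \codim \Gamma_d(X_u,X^v)$ yields exactly the asserted inequality
\[
(-1)^{\ell(w)-\codim \Gamma_d(X_u,X^v)}\, I_d(\cO_u,\cO^v,\cO_w^\vee) \;\geq\; 0 .
\]

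This essentially completes the argument, and I do not expect any real obstacle. The only nontrivial input beyond Corollary~\ref{cor:pgw_class} and Brion's theorem is that $\Gamma_d(X_u,X^v)$ has rational singularities, which is furnished by Theorem~\ref{thm:main}(a); the standing hypothesis $\Gamma_d(X_u,X^v)\neq\emptyset$ is what ensures that $Y$ is a genuine irreducible variety, so that the codimension appearing in the exponent is well defined.
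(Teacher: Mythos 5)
Your argument is correct and is essentially the paper's own proof: the paper likewise deduces the sign statement by combining Corollary~\ref{cor:pgw_class} with Brion's theorem on alternating signs for classes of closed irreducible subvarieties with rational singularities, the rational singularities of $\Gamma_d(X_u,X^v)$ being supplied by Theorem~\ref{thm:main}(a). Your extra care in checking irreducibility (via unirationality of $M_d(X_u,X^v)$) and the sign normalization is fine but does not change the route.
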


\noindent
We will explain how to compute the codimension of $\Gamma_d(X_u,X^v)$
in \Remark{codim}.

\begin{remark}\label{remark:pos}
  Since all relevant maps and classes are $T$-equivariant,
  \Corollary{pgw_class} holds more generally for the class of
  $\Gamma_d(X_u,X^v)$ in the Grothendieck ring $K_T(X)$ of
  $T$-equivariant vector bundles on $X$, with the same proof.  To be
  precise, let $\cO_u = [\cO_{X_u}]$ and $\cO^v = [\cO_{X^v}]$ denote
  $T$-equivariant Schubert classes in $K_T(X)$, define
  $\cO_w^\vee \in K_T(X)$ by
  $\Teuler{X}(\cO^u \cdot \cO_w^\vee) = \delta_{u,w}$ where
  $\Teuler{X} : K_T(X) \to K_T(\pt)$ is the pushforward map along the
  structure morphism $X \to \{\pt\}$, and define $T$-equivariant
  $K$-theoretic Gromov-Witten invariants of $X$ by
  \[
  I^T_d(\cO_u, \cO^v, \cO_w^\vee) = \Teuler{M_d}(\ev_1^*(\cO_u) \cdot
  \ev_2^*(\cO^v) \cdot \ev_3^*(\cO_w^\vee)) \ \in K_T(\pt) \,.
  \]
  Then the equivariant Grothendieck class of $\Gamma_d(X_u,X^v)$ is
  given by
  \[
  [\cO_{\Gamma_d(X_u,X^v)}] = \sum_{w\in W^P} I^T_d(\cO_u, \cO^v,
  \cO_w^\vee)\, \cO^w \ \in K_T(X) \,.
  \]
  Furthermore, a generalization of Brion's theorem by Anderson,
  Griffeth, and Miller
  \cite[Cor.~5.1]{anderson.griffeth.ea:positivity} implies that the
  equivariant Gromov-Witten invariants of $X$ satisfy the positivity
  property
  \[
  (-1)^{\ell(w) - \codim \Gamma_d(X_u,X^v)}\, I^T_d(\cO_u, \cO^v,
  \cO_w^\vee) \ \in \ \N\big[\, [\C_\beta] - 1 : \beta \in \Delta\,
  \big] \,.
  \]
  In other words, up to a sign the invariant
  $I^T_d(\cO_u, \cO^v, \cO_w^\vee)$ can be written as a polynomial
  with non-negative integer coefficients in the classes
  $[\C_\beta] - 1 \in K_T(\pt)$ defined by the simple roots
  $\beta \in \Delta$.  Here $\C_\beta$ denotes the one-dimensional
  representation of $T$ defined by $t.z = \beta(t)z$ for $t \in T$ and
  $z \in \C$.
\end{remark}

\section{The quantum equals classical theorem}\label{sec:qclas}

In order to prove \Theorem{main} we need the `quantum equals
classical' theorem for Gromov-Witten invariants of cominuscule
varieties, which can be found in various generalities in the papers
\cite{buch.kresch.ea:gromov-witten, chaput.manivel.ea:quantum*1,
  buch.mihalcea:quantum, chaput.perrin:rationality}.  We will say that
a non-negative degree $d \in H_2(X)$ is {\em well behaved\/} if $X$ is
not the Cayley plane $E_6/P_6$ or $d \neq 3$.  We will explain the
theorem only for Gromov-Witten invariants of well behaved degrees.
The correct statement for Gromov-Witten invariants of the Cayley plane
of degree 3 can be found in \cite{chaput.perrin:rationality}.

For any non-negative degree $d$ and $n \in \N$ we set
$\cZ_{d,n} = \ev(\Mb_{0,n}(X,d)) \subset X^n$.  Given two points
$x,y \in X$, we let $d(x,y)$ denote the smallest degree of a rational
curve containing $x$ and $y$ \cite{zak:tangents}.  Equivalently,
$d(x,y)$ is the minimal degree $d$ for which $(x,y) \in \cZ_{d,2}$.
For $n \in \N$ we also let $d_X(n)$ be the smallest degree for which
any collection of $n$ points in $X$ is contained in a connected
rational curve of degree $d_X(n)$, i.e.\ $d_X(n)$ is minimal with the
property that $\cZ_{d_X(n),n} = X^n$.  The numbers $d_X(2)$ and
$d_X(3)$ are given in the following table (see
\cite[Prop.~18]{chaput.manivel.ea:quantum*1},
\cite[Prop.~3.4]{chaput.perrin:rationality}, and \cite[\S
4]{buch.chaput.ea:finiteness}).\medskip

\begin{center}
\begin{tabular}{c c c c c c}
  \hline\vspace{-3mm}\\
  $X$ & $\dim(X)$ & $d_X(2)$ & $d_X(3)$\vspace{1mm}\\
  \hline\vspace{-3mm}\\
  $\Gr(m,m+k)$ & $mk$ & $\min(m,k)$ & $\min(2m,2k,\max(m,k))$\vspace{1mm}\\
  $\LG(m,2m)$ & $\frac{m(m+1)}{2}$ & $m$ & $m$\vspace{1mm}\\
  $\OG(m,2m)$ & $\frac{m(m-1)}{2}$ & $\lfloor\frac{m}{2}\rfloor$ &
  $\lceil\frac{m}{2}\rceil$\vspace{1mm}\\
  $Q^m$ & $m$ & $2$ & $2$\vspace{1mm}\\
  $E_6/P_6$ & 18 & 2 & 4\vspace{1mm}\\
  $E_7/P_7$ & 27 & 3 & 3\vspace{1mm}\\
  \hline
\end{tabular}
\end{center}
\bigskip

Fix a well behaved degree $d \in H_2(X)$.  The main ingredient in the
`quantum equals classical' theorem is a homogeneous space $Y_d$ that
parametrizes a family of subvarieties in $X$.  If
$X = \Gr(m,N) = \{ V \subset \C^N \mid \dim(V)=m \}$ is a Grassmann
variety of type A, then $Y_d$ is the two-step partial flag variety
defined by
\[
Y_d = \Fl(a,b;N) = \{ (A,B) \mid A \subset B \subset \C^N ,\,
\dim(A)=a \,,\, \dim(B)=b \}
\]
where $a = \max(m-d,0)$ and $b = \min(m+d,N)$.  For each point
$\omega=(A,B) \in Y_d$ we set
$X_\omega = \Gr(m-a,B/A) = \{ V \in X \mid A \subset V \subset B \}$.
The idea is that the kernel $A$ and span $B$ \cite{buch:quantum} of a
general rational curve $C \subset X$ of degree $d$ form a point
$\omega = (A,B) \in Y_d$ such that $C \subset X_\omega$.

If $X$ is not a Grassmannian of type A, then we have either
$d \leq d_X(2)$ or $d \geq d_X(3)$.  In these cases we use the
set-theoretic definition
\[
Y_d = \{ \Gamma_d(x,y) \subset X \mid x,y \in X \text{ and } d(x,y) =
\min(d,d_X(2)) \} \,.
\]
For each element $\omega \in Y_d$ we let $X_\omega$ denote the
corresponding subvariety of $X$ of the form $\Gamma_d(x,y)$.  It
follows from \cite[Prop.~18]{chaput.manivel.ea:quantum*1} that $Y_d$
can be identified with a projective homogeneous space for $G$, and
that $X_\omega$ is a non-singular Schubert variety in $X$ for each
point $\omega \in Y_d$.  Notice that for $d \geq d_X(3)$, the variety
$Y_d$ is a single point and $X_\omega=X$ for $\omega \in Y_d$.

Define the incidence variety
$Z_d = \{ (\omega,x) \in Y_d \times X \mid x \in X_\omega \}$, and let
$p : Z_d \to X$ and $q : Z_d \to Y_d$ be the two projections.  For
$X = \Gr(m,N)$ we have $Z_d = \Fl(a,m,b;N)$ where $a$ and $b$ are
defined as above.  Otherwise it follows from
\cite[Prop.~18]{chaput.manivel.ea:quantum*1} that the stabilizer of
$X_\omega$ in $G$ acts transitively on $X_\omega$, which implies that
$Z_d$ is a projective homogeneous space for $G$ \footnote{The variety
  $Z_d$ is called $I_d$ in \cite{chaput.manivel.ea:quantum*1}, while
  $Y_d$ is called $F_d$, and $X_\omega$ is called $Y_d$.}.  The
following result holds more generally for the $T$-equivariant
$K$-theoretic Gromov-Witten invariants of $X$.

\begin{thm}[\cite{buch.kresch.ea:gromov-witten,
    chaput.manivel.ea:quantum*1, buch.mihalcea:quantum,
    chaput.perrin:rationality}]\label{thm:qclas}
  Let $X$ be a cominuscule variety and $d \in H_2(X)$ a well behaved
  degree.  Given classes $\sigma_1, \sigma_2, \sigma_3 \in K(X)$, the
  corresponding $K$-theoretic Gromov-Witten invariant of $X$ of degree
  $d$ is given by
  \[
  I_d(\sigma_1, \sigma_2, \sigma_3) = \euler{Y_d}(q_*p^*(\sigma_1)
  \cdot q_*p^*(\sigma_2) \cdot q_*p^*(\sigma_3)) \,.
  \]
\end{thm}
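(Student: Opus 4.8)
The plan is to reduce the quantum (Gromov--Witten) invariant on $M_d = \Mb_{0,3}(X,d)$ to an intersection-theoretic computation on the homogeneous space $Y_d$ by inserting the incidence variety $Z_d$ as an intermediate space, using the cohomological triviality results of Section~\ref{sec:ctriv} together with the geometry of kernel-span pairs (in type~A) or of the family $\{X_\omega\}_{\omega \in Y_d}$ (in the other cominuscule cases). First I would recall the key geometric fact underlying the construction: for a well behaved degree $d$, a general rational curve $C \subset X$ of degree $d$ is contained in $X_\omega$ for a \emph{unique} $\omega \in Y_d$ (in type~A this $\omega = (A,B)$ is the kernel-span pair; in the other cases it is the unique $X_\omega = \Gamma_d(x,y)$ of the appropriate type containing $C$), and conversely every subvariety $X_\omega$ is swept out by degree-$d$ rational curves. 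Because each $X_\omega$ is a Grassmannian $\Gr(m-a,B/A)$ (type~A) or a cominuscule-type homogeneous space of its own on which the degree $d$ is ``large'', the space of degree-$d$ curves in $X_\omega$ has the property that $d_{X_\omega}(3)$ is achieved, so any three points of $X_\omega$ lie on a connected rational curve of degree $d$.

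The main step is then to construct a space $M_d'$ fibered over $Y_d$ whose fiber over $\omega$ is $\Mb_{0,3}(X_\omega, d)$, equivalently $M_d' = \Mb_{0,3}(Z_d/Y_d, d)$, the relative Kontsevich space of the projection $q : Z_d \to Y_d$, with a natural map $\Phi : M_d' \to M_d$ obtained by composing a relative stable map with $p : Z_d \to X$. The evaluation maps factor as $\ev_i = p \circ \ev_i^{Z} \circ (\text{inclusion})$ through the relative evaluation $\ev_i^{Z} : M_d' \to Z_d$, and the three relative evaluation maps together with $q$ give a map $M_d' \to Z_d \times_{Y_d} Z_d \times_{Y_d} Z_d$. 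I would argue: (i) $\Phi$ is birational onto $M_d$ — injectivity on a dense open set follows from uniqueness of $\omega$ for a general curve, dominance from the fact that every degree-$d$ curve lies in some $X_\omega$; and (ii) $\Phi$ is cohomologically trivial. For (ii), one invokes Proposition~\ref{prop:gysin}: both $M_d'$ and $M_d$ have rational singularities (the latter by \cite{buch.chaput.ea:finiteness}, the former because it is built from homogeneous spaces and Kontsevich spaces over them, which are known to have rational singularities), and the general fibers of $\Phi$ are rationally connected — in fact, over a general stable map $f : C \to X$, the fiber records the choice of $\omega$ with $C \subset X_\omega$, which is a single point generically, so the general fiber is a point. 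Then by the projection formula and cohomological triviality,
\[
I_d(\sigma_1,\sigma_2,\sigma_3) = \euler{M_d'}\bigl((\ev_1^Z)^*p^*\sigma_1 \cdot (\ev_2^Z)^*p^*\sigma_2 \cdot (\ev_3^Z)^*p^*\sigma_3\bigr).
\]

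Finally I would push forward from $M_d'$ to $Y_d$ in stages. The relative evaluation map $(\ev_1^Z, \ev_2^Z, \ev_3^Z) : M_d' \to Z_d \times_{Y_d} Z_d \times_{Y_d} Z_d$ is, fiberwise over $\omega \in Y_d$, the three-pointed evaluation $\Mb_{0,3}(X_\omega,d) \to X_\omega^3$; since $d$ is ``large enough'' for $X_\omega$ (i.e.\ $\cZ_{d,3} = X_\omega^3$), this map is surjective with rationally connected general fibers, hence cohomologically trivial by Proposition~\ref{prop:gysin} applied relatively over $Y_d$ (or fiber by fiber, using that cohomological triviality can be checked after the base change to each fiber via the theorem on cohomology and base change, since the higher direct images vanish). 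Therefore $I_d(\sigma_1,\sigma_2,\sigma_3) = \euler{Z_d^{\times_{Y_d} 3}}(p_1^*\sigma_1 \cdot p_2^*\sigma_2 \cdot p_3^*\sigma_3)$ where $p_i$ is $p$ composed with the $i$-th projection. Now the fiber product $Z_d \times_{Y_d} Z_d \times_{Y_d} Z_d \to Y_d$ is a product of three copies of the $q$-fibration, so by flat base change and the projection formula the Euler characteristic over $Z_d^{\times_{Y_d}3}$ equals the Euler characteristic over $Y_d$ of $q_*p^*\sigma_1 \cdot q_*p^*\sigma_2 \cdot q_*p^*\sigma_3$, which is the asserted formula. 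The main obstacle I expect is step (ii) — verifying that $M_d'$ has rational singularities and that the map $\Phi$ genuinely has rationally connected (generically one-point) fibers, which requires a careful identification of $M_d'$ with a space to which the finiteness/rationality machinery of \cite{buch.chaput.ea:finiteness, chaput.perrin:rationality} applies; the equivariant refinement then follows by the same argument carried out $T$-equivariantly, since all the maps involved are $T$-equivariant and Proposition~\ref{prop:gysin} has an equivariant analogue.
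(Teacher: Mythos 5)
First, note that the paper does not prove Theorem~\ref{thm:qclas}: it is imported from \cite{buch.kresch.ea:gromov-witten, chaput.manivel.ea:quantum*1, buch.mihalcea:quantum, chaput.perrin:rationality}, and the surrounding text of Section~\ref{sec:qclas} merely recalls the mechanism of those proofs via the diagram (\ref{eqn:diagram}). Your outline reconstructs exactly that mechanism: your $M_d'$ is the variety $\Bl_d = \{(\omega,f) \mid \im(f)\subset X_\omega\}$, your map $\Phi$ is $\pi$, your relative evaluation is $\phi$, and your last step (pushing $p_1^*\sigma_1\cdot p_2^*\sigma_2\cdot p_3^*\sigma_3$ from $Z_d\times_{Y_d}Z_d\times_{Y_d}Z_d = Z_d^{(3)}$ down to $Y_d$ by flat base change and the projection formula) is the standard K\"unneth argument. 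So the skeleton is the right one, and the birationality of $\pi$ (uniqueness of $\omega$ for a general curve) is indeed the content of \cite[Lemma~4.6]{buch.mihalcea:quantum} and \cite[Prop.~19]{chaput.manivel.ea:quantum*1}.

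The genuine gap is in your treatment of the relative evaluation map $\phi : \Bl_d \to Z_d^{(3)}$. You justify its cohomological triviality by saying that ``since $d$ is large enough for $X_\omega$ (i.e.\ $\cZ_{d,3}=X_\omega^3$), this map is surjective with rationally connected general fibers.'' The parenthetical only gives surjectivity; it says nothing about the fibers. The rational connectedness of the general fiber --- i.e.\ that the Gromov--Witten variety $M_d(x,y,z)$ of degree-$d$ stable maps in $X_\omega$ through three general points is rationally connected --- is the deep input of the whole theorem, proved in \cite[Cor.~2.2]{buch.mihalcea:quantum} for type A and \cite[Thm.~0.2]{chaput.perrin:rationality} in general, and it does not follow from surjectivity or from any ``largeness'' of $d$ (surjectivity of the triple evaluation holds in many situations where the quantum-equals-classical formula fails, e.g.\ for non-cominuscule $G/P$). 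You do gesture at the machinery of these references at the end, but you locate the main difficulty in the birationality/rational-singularities step for $\Phi$, which is comparatively routine, while the fiberwise rationality of $\phi$ is asserted as if automatic. In a correct write-up this step must be explicitly invoked (or reproved), exactly as the paper does when it states that ``the general fibers of $\phi$ are rational.'' A minor additional point: outside type A the uniqueness of $\omega$ containing a general curve (needed for birationality of $\pi$) also requires the cited argument rather than the kernel-span description.
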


Define the variety
$\Bl_d = \{(\omega,f) \in Y_d \times M_d \mid \im(f) \subset X_\omega
\}$
and let $\pi : \Bl_d \to M_d$ be the projection.  This map is
birational by \cite[Lemma~4.6]{buch.mihalcea:quantum} and
\cite[Prop.~19]{chaput.manivel.ea:quantum*1}.  We also define
$Z_d^{(3)} = \{(\omega,x_1,x_2,x_3) \in Y_d \times X^3 \mid x_i \in
X_\omega \text{ for } 1 \leq i \leq 3 \}$,
and let $\phi : \Bl_d \to Z_d^{(3)}$ be the morphism defined by
$\phi(\omega,f) = (\omega,\ev(f))$.  It follows from
\cite[Cor.~2.2]{buch.mihalcea:quantum} and
\cite[Thm.~0.2]{chaput.perrin:rationality} that the general fibers of
$\phi$ are rational.  For $1 \leq i \leq 3$ we also define
$e_i : Z_d^{(3)} \to Z_d$ by $e_i(\omega,x_1,x_2,x_3) = (\omega,x_i)$.
We then have the following commutative diagram from
\cite{buch.mihalcea:quantum}.

\begin{equation}\label{eqn:diagram}
  \xymatrix{\Bl_d \ar[rr]^{\pi} \ar[d]^{\phi} & & M_d \ar[d]^{\ev_i} \\
    Z_d^{(3)}\ar[r]^{e_i} & Z_d \ar[r]^{p} \ar[d]^{q} & X \\ & Y_d}
\end{equation}

For $u, v \in W^P$ we define a Richardson variety in $Z_d$ by
\[
Z_d(X_u,X^v) = q^{-1}( q(p^{-1}(X_u)) \cap q(p^{-1}(X^v)) ) \,.
\]
Since both maps $\pi$ and $\phi$ are surjective, it follows from the
definitions that the projected Gromov-Witten variety for $u$ and $v$
is given by
\begin{equation}\label{eqn:gwprojrich}
  \Gamma_d(X_u,X^v) = p(Z_d(X_u,X^v)) \,.
\end{equation}
In particular, the projected Gromov-Witten variety $\Gamma_d(X_u,X^v)$
is also a projected Richardson variety.  This proves
\Theorem{main}(a).  We pause here to give a direct proof of
\Corollary{pgw_class} from \Theorem{qclas}, without invoking
\Theorem{main}(b).

\begin{proof}[Direct proof of \Corollary{pgw_class}]
  Assume first that $d$ is a well behaved degree.  For $u,v,w \in W^P$
  we then obtain from \Proposition{projrich} and \Theorem{qclas} that
  \[
  \begin{split}
    &\euler{X}([\cO_{\Gamma_d(X_u,X^v)}] \cdot \cO_w^\vee) =
    \euler{X}(p_*[\cO_{Z_d(X_u,X^v)}] \cdot \cO_w^\vee) =
    \euler{Z_d}([\cO_{Z_d(X_u,X^v)}] \cdot p^*(\cO_w^\vee)) \\
    & \ \ \ = \euler{Y_d}(q_*p^*(\cO_u) \cdot q_*p^*(\cO^v) \cdot
    q_*p^*(\cO_w^\vee)) = I_d(\cO_u,\cO^v,\cO_w^\vee) \,,
  \end{split}
  \]
  as required.

  Assume next that $X = E_6/P_6$ is the Cayley plane and $d=3$.  It
  then follows from \cite[Cor.~4.6]{buch.chaput.ea:finiteness} that
  $\Gamma_3(1.P, w_0.P)$ is a translate of the Schubert divisor
  $X^{s_\ga}$ in $X$, where $w_0 \in W$ is the longest element.  This
  in turn implies that, if either $X_u$ or $X^v$ has positive
  dimension, then $\Gamma_3(X_u,X^v) = X$, see \Lemma{cayleyG3}.  It
  follows that
  \[
  [\cO_{\Gamma_3(X_u,X^v)}] =
  \begin{cases}
    \cO^{s_\ga} & \text{if $\dim(X_u)=\dim(X^v)=0$\,;} \\
    1 & \text{otherwise.}
  \end{cases}
  \]
  This is compatible with the quantum equals classical theorem for the
  Cayley plane proved in \cite{chaput.perrin:rationality}, which
  implies that $I_3(\cO_u, \cO^v, \cO_w^\vee)$ is equal to
  $\delta_{w,s_\ga}$ when $\dim(X_u) = \dim(X^v) = 0$, and equal to
  $\delta_{w,1}$ otherwise.
\end{proof}

Let $u, v \in W^P$ and define the varieties
$\Bl_d(X_u,X^v) = \pi^{-1}(M_d(X_u,X^v))$ and
$Z_d^{(3)}(X_u,X^v) = (p e_1)^{-1}(X_u) \cap (p e_2)^{-1}(X^v)$.  Then
\eqn{diagram} restricts to the following commutative diagram.  It
follows from \cite[Prop.~3.2 and Thm.~2.5]{buch.chaput.ea:finiteness}
that all varieties in this diagram are irreducible and have rational
singularities.

\begin{equation}\label{eqn:restrict}
  \xymatrix{\Bl_d(X_u,X^v) \ar[rr]^{\pi} \ar[d]^{\phi} & &
    M_d(X_u,X^v) \ar[d]^{\ev_3} \\
    Z_d^{(3)}(X_u,X^v) \ar[r]^{e_3} & Z_d(X_u,X^v) \ar[r]^{p}
    & \Gamma_d(X_u,X^v)}
\end{equation}

It follows from Kleiman's transversality theorem
\cite{kleiman:transversality} that the restricted maps $\pi$ and
$\phi$ of \eqn{restrict} have the same properties as the corresponding
maps of the diagram \eqn{diagram}, i.e.\ $\pi$ is birational and the
general fibers of $\phi$ are rational.  \Proposition{gysin} therefore
implies that $\pi$ and $\phi$ are cohomologically trivial, and
\Proposition{projrich} shows that $p$ is cohomologically trivial.
\Theorem{main}(b) therefore follows from \Lemma{composite} and
\Proposition{gysin} together with the following statement.

\begin{prop}
  The general fibers of the restricted map
  $e_3 : Z_d^{(3)}(X_u,X^v) \to Z_d(X_u,X^v)$ are rational.
\end{prop}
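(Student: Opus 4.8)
The plan is to analyze the map $e_3 : Z_d^{(3)}(X_u,X^v) \to Z_d(X_u,X^v)$ fiber-by-fiber over $Y_d$, reducing everything to a statement about the homogeneous pieces $X_\omega$. For a general point $(\omega,x_3) \in Z_d(X_u,X^v)$, the fiber $e_3^{-1}(\omega,x_3)$ consists of all pairs $(x_1,x_2)$ with $x_1,x_2 \in X_\omega$, $x_1 \in X_u$, and $x_2 \in X^v$; that is, it is isomorphic to $(X_u \cap X_\omega) \times (X^v \cap X_\omega)$. So it suffices to show that, for $\omega$ in a dense open subset of $Y_d$ (more precisely, for $(\omega,x_3)$ general in $Z_d(X_u,X^v)$), each of the two intersections $X_u \cap X_\omega$ and $X^v \cap X_\omega$ is rational, or at least that their product is.

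First I would recall that $X_\omega \cong q^{-1}(\omega) \subset Z_d$ is a homogeneous space for a reductive group, and that $p^{-1}(X_u) \cap q^{-1}(\omega)$ is canonically identified with $X_u \cap X_\omega$ under $p$. Applying Proposition~\ref{prop:projschub} to the projective morphism $q : p^{-1}(X_u) \to Y_d$ (whose source is a Schubert variety in the homogeneous space $Z_d$, since $X_u$ is a Schubert variety in $X$ and $p$ is the projection of homogeneous spaces), I get that $X_u \cap X_\omega$ is a Schubert variety in $X_\omega$ for all $\omega$ in a dense open subset of $q(p^{-1}(X_u))$; likewise for $X^v$, using that $X^v$ is a Schubert variety for the opposite Borel. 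Shrinking to the open locus where both conditions hold simultaneously — and intersecting with the open locus in $Y_d$ coming from the definition of $Z_d(X_u,X^v)$ — I conclude that for $(\omega,x_3)$ general in $Z_d(X_u,X^v)$, the fiber $e_3^{-1}(\omega,x_3) \cong (X_u \cap X_\omega) \times (X^v \cap X_\omega)$ is a product of two Schubert varieties. Schubert varieties in homogeneous spaces are rational, hence so is the product, which gives the claim.

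The main obstacle I anticipate is not the rationality of Schubert varieties — that is classical — but rather bookkeeping the genericity: one must make sure that a \emph{general} point of $Z_d(X_u,X^v)$ lies over a \emph{general} point of $Y_d$ in the relevant sense, i.e.\ that the composite $Z_d(X_u,X^v) \hookrightarrow Z_d \xrightarrow{q} Y_d$ is dominant onto the subvariety $q(p^{-1}(X_u)) \cap q(p^{-1}(X^v))$ of $Y_d$ and that the two dense open loci furnished by Proposition~\ref{prop:projschub} pull back to dense open subsets of $Z_d(X_u,X^v)$. Since $Z_d(X_u,X^v) = q^{-1}(q(p^{-1}(X_u)) \cap q(p^{-1}(X^v)))$ by definition and $q$ restricted to it is surjective onto $q(p^{-1}(X_u)) \cap q(p^{-1}(X^v))$ with irreducible fibers $X_\omega$, this is straightforward: the preimage of a dense open subset of the base under a surjective morphism with irreducible source is dense and open. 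One also needs $Z_d(X_u,X^v)$ to be irreducible, which is recorded just before diagram~(\ref{eqn:restrict}). Thus the only genuinely delicate point is verifying the hypotheses of Proposition~\ref{prop:projschub}, namely that $p^{-1}(X_u)$ and $p^{-1}(X^v)$ are Schubert varieties in the homogeneous space $Z_d$ — which holds because $p : Z_d \to X$ is a $G$-equivariant projection of homogeneous spaces, so preimages of ($B$- or $B^-$-stable) Schubert varieties are again Schubert varieties of the same type, and in particular have rational singularities so that Brion's lemma applies inside the proof of Proposition~\ref{prop:projschub}.
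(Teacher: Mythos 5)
Your proof follows essentially the same route as the paper: identify the fiber $e_3^{-1}(\omega,x_3)$ with $(X_u \cap X_\omega) \times (X^v \cap X_\omega)$ via $X_\omega \cong q^{-1}(\omega)$ and $\Omega \cap X_\omega \cong p^{-1}(\Omega) \cap q^{-1}(\omega)$, and invoke Proposition~\ref{prop:projschub} to see that both factors are Schubert varieties, hence rational, for general $\omega$. The one caveat is your genericity bookkeeping: the open loci furnished by Proposition~\ref{prop:projschub} are dense in $q(p^{-1}(X_u))$ and in $q(p^{-1}(X^v))$, not in their intersection, so their preimages are not automatically dense in $Z_d(X_u,X^v)$; this is repaired (and is left implicit in the paper as well) by observing that these loci may be taken $B$- resp.\ $B^-$-stable, hence contain the open $B$- resp.\ $B^-$-orbit, while the open cell of the Richardson variety $q(p^{-1}(X_u)) \cap q(p^{-1}(X^v))$ lies in both of those orbits, so a general point of $Z_d(X_u,X^v)$ does map into both good loci.
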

\begin{proof}
  For any $\omega \in Y_d$ the variety
  $X_\omega \cong q^{-1}(\omega) \subset Z_d$ is a homogeneous space.
  Furthermore, for any Schubert variety $\Omega \subset X$ we have
  $\Omega \cap X_\omega \cong p^{-1}(\Omega) \cap q^{-1}(\omega)$.  It
  therefore follows from \Proposition{projschub} that
  $\Omega \cap X_\omega$ is a Schubert variety for all points $\omega$
  in a dense open subset of $q(p^{-1}(\Omega)) \subset Y_d$.  Since
  the fiber of $e_3 : Z_d^{(3)} \to Z_d$ over an arbitrary point
  $(\omega,x_3) \in Z_d$ is given by
  $e_3^{-1}(\omega,x_3) \cong X_\omega \times X_\omega$, we obtain
  $Z_d^{(3)}(X_u,X^v) \cap e_3^{-1}(\omega,x_3) \cong (X_u \cap
  X_\omega) \times (X^v \cap X_\omega)$.  This proves the proposition.
\end{proof}

\begin{remark}\label{remark:codim}
  The codimension of $\Gamma_d(X_u,X^v)$ can be computed as follows.
  We assume that $d$ is a well behaved degree and $u,v \in W^P$.  Let
  $Q \subset G$ be a parabolic subgroup containing $B$ such that
  $Y_d = G/Q$.  Let $\ov{u}$ be the maximal element in the coset
  $u w_P W_Q$ where $w_P$ is the longest element in $W_P$, and let
  $\ov{v}$ be the minimal element in $v W_Q$.  Set $F = G/B$ and let
  $\rho : F \to X$ be the projection.  It follows from
  \eqn{gwprojrich} that
  $\Gamma_d(X_u,X^v) = \rho(F_{\ov{u}} \cap F^{\ov{v}})$, so we have
  $\Gamma_d(X_u,X^v) \neq \emptyset$ if and only if
  $\ov{v} \leq \ov{u}$ in the Bruhat order on $W$.  The {\em Hecke
    product\/} on $W$ is the unique associative monoid product such
  that $s_i \cdot w$ is equal to $s_i w$ if $\ell(s_i w) > \ell(w)$
  and equal to $w$ otherwise (see \cite[\S3]{buch.mihalcea:curve} for
  details and references).  Choose $a \in W_P$ such that $\ov{v}a$ is
  the maximal element in $\ov{v} W_P$.  It follows from
  \cite[Prop.~3.3]{knutson.lam.ea:projections} that
  $\rho(F_{\ov{u}} \cap F^{\ov{v}}) = \rho(F_{\ov{u}\cdot a} \cap
  F^{\ov{v}a})$
  where $\ov{u} \cdot a$ is the Hecke product of $\ov{u}$ and $a$.
  Since $\rho$ maps $F^{\ov{v}a}$ birationally onto its image in $X$,
  we deduce that $\Gamma_d(X_u,X^v)$ is birational to
  $F_{\ov{u}\cdot a} \cap F^{\ov{v}a}$.  This implies that
  $\dim \Gamma_d(X_u,X^v) = \ell(\ov{u}\cdot a) - \ell(\ov{v}a)$.
\end{remark}

%
%
\begin{example}
  Let $G = \GL(6)$ and set $F = \Fl(6) = G/B$ where $B$ is the Borel
  subgroup of upper triangular matrices.  The Weyl group of $G$ is the
  symmetric group $S_6$, where each permutation $w \in S_6$ is
  identified with the permutation matrix whose entry in position
  $(i,j)$ is equal to 1 whenever $i = w(j)$.  Let
  $\cO^w = [\cO_{F^w}] \in K(F)$ be the Grothendieck class of the
  opposite Schubert variety $F^w = \ov{B^- w.B} \subset F$.  Consider
  the Richardson variety $R = F_u \cap F^v$ in $F$ defined by the
  permutations $u = 642153$ and $v = 132546$ in $S_6$.  A computation
  with Grothendieck polynomials
  \cite{lascoux.schutzenberger:structure} then gives
  \[
  [\cO_R] = \cO^{w_0 u} \cdot \cO^v = \cO^{154623} +2\cO^{245613}
  +\cO^{253614} -\cO^{345612} -3\cO^{254613} +\cO^{354612}
  \]
  in $K(F)$ where $w_0 = 654321$ is the longest permutation.

  Now let $P$ be the parabolic subgroup such that
  $B \subset P \subset G$ and $X = G/P = \Gr(2,6)$ is the Grassmann
  variety of 2-planes in $\C^6$.  Let $\rho : F \to X$ be the
  projection.  The set $W^P$ consists of permutations $w$ for which
  $w(j)<w(j+1)$ for $j \neq 2$, and each such permutation $w$ can be
  identified with the partition $(w(2)\!-\!2\,,\,w(1)\!-\!1)$.  With
  this notation we obtain from \Proposition{projrich} that the class
  of the projected Richardson variety $\rho(R)$ is given by
  \[
  [\cO_{\rho(R)}] = \rho_*([\cO_R]) = \cO^{(3,0)} +2\cO^{(2,1)}
  -2\cO^{(3,1)} -\cO^{(2,2)} +\cO^{(3,2)}
  \]
  in $K(X)$.  The Grothendieck classes of all projected Gromov-Witten
  varieties $\Gamma_d(X_\la,X^\mu)$ in $X$ can be computed using
  \Corollary{pgw_class} combined with \Theorem{qclas} or the Pieri
  rule for $\QK(X)$ obtained in
  \cite[Thm.~5.4]{buch.mihalcea:quantum}.  It turns out that the class
  $[\cO_{\rho(R)}]$ is not the class of any projected Gromov-Witten
  variety in $X$.  Therefore not all projected Richardson varieties in
  $X$ are projected Gromov-Witten varieties.
\end{example}

\section{The Cayley plane}\label{sec:cayley}

The `quantum equals classical' theorem proved in
\cite{chaput.perrin:rationality} for Gromov-Witten invariants of
degree $3$ of the Cayley plane $E_6/P_6$ involves a variety $Z_3$ that
is not a homogeneous space.  It is therefore not possible to use
\Proposition{projrich} to prove that the map $p$ of the diagram
\eqn{restrict} is cohomologically trivial.  In this section we give a
different proof of \Theorem{main}(b) for the Cayley plane when $d=3$.

Let $X$ be a cominuscule variety and fix a degree $d \geq 0$.  Set
$\cZ_d = \cZ_{d,3} = \ev(M_d) \subset X^3$.  Given three subvarieties
$\Omega_1$, $\Omega_2$, $\Omega_3$ of $X$ we define the varieties
$M_d(\Omega_1,\Omega_2,\Omega_3) = \ev^{-1}(\Omega_1 \times \Omega_2
\times \Omega_3) \subset M_d$,
$\cZ_d(\Omega_1,\Omega_2,\Omega_3) = \cZ_d \cap (\Omega_1 \times
\Omega_2 \times \Omega_3)$,
and
$\cZ_d(\Omega_1,\Omega_2) = \cZ_d \cap (\Omega_1 \times \Omega_2
\times X)$.
It was proved in \cite{buch.mihalcea:quantum,
  chaput.perrin:rationality} that the general fibers of the map
$\ev : M_d \to \cZ_d$ are rational.  Let $U \subset \cZ_d$ be a dense
open subset such that $M_d(x,y,z) = \ev^{-1}(x,y,z)$ is rational for
all points $(x,y,z) \in U$.  Let $G$ act diagonally on $X^3$ and on
$\cZ_d$.  Since the evaluation map is equivariant for this action, we
may assume that $U$ is $G$-stable.

\begin{lemma}\label{lemma:meetU}
  Let $u, v \in W^P$ satisfy $\Gamma_d(X_u,X^v) \neq \emptyset$.  Then
  $\cZ_d(X_u,X^v) \cap U \neq \emptyset$.
\end{lemma}
\begin{proof}
  Set $d_0 = \min(d,d_X(2))$.  The assumption implies that we may
  choose $(x_0,y_0) \in X_u \times X^v$ such that $d(x_0,y_0) = d_0$.
  Furthermore, since $\cZ_d$ contains a dense open subset of points
  $(x,y,z)$ for which $d(x,y)=d_0$, we may choose $(x,y,z) \in U$ with
  $d(x,y) = d_0$.  Finally, since $G$ acts transitively on the set of
  pairs of points of distance $d_0$ by
  \cite[Prop.~18]{chaput.manivel.ea:quantum*1}, we have
  $(x_0,y_0) = g.(x,y)$ for some $g \in G$.  It follows that
  $(x_0,y_0,g.z) = g.(x,y,z) \in \cZ_d(X_u,X^v) \cap U$, as required.
\end{proof}

Now let $X = E_6/P_6$ be the Cayley plane.  We need the following
fact.

\begin{lemma}\label{lemma:cayleyG3}
  Let $X$ be the Cayley plane and let $\Omega_1$ and $\Omega_2$ be
  irreducible closed subvarieties of $X$.  If both of these varieties
  are single points and $d(\Omega_1,\Omega_2) = 2$, then
  $\Gamma_3(\Omega_1, \Omega_2)$ is a translate of the Schubert
  divisor $X^{s_\ga}$.  In all other cases we have
  $\Gamma_3(\Omega_1,\Omega_2) = X$.
\end{lemma}
\begin{proof}
  If $\Omega_1$ and $\Omega_2$ are single points, then the lemma
  follows from \cite[Cor.~4.6]{buch.chaput.ea:finiteness}.  Assume
  that $\Omega_2$ has positive dimension and let $x \in \Omega_1$ be
  any point.  It is enough to show that $\Gamma_3(x,\Omega_2) = X$.
  If $z \in X$ is any point, then $\Gamma_3(x,z)$ contains a divisor,
  and this implies that $\Gamma_3(x,z) \cap \Omega_2 \neq \emptyset$.
  It follows that $z \in \Gamma_3(x,\Omega_2)$.
\end{proof}

\Theorem{main}(b) for the Cayley plane and $d=3$ is a consequence of
the following result together with \Proposition{gysin}.

\begin{prop}
  Let $X$ be the Cayley plane and let $u,v \in W^P$.  Then the general
  fibers of the evaluation map
  $\ev_3 : M_3(X_u,X^v) \to \Gamma_3(X_u,X^v)$ are rationally
  connected.
\end{prop}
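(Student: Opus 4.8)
The plan is to reduce the statement to a rational-connectedness assertion about fibers of an evaluation map $\ev_3$ restricted to Gromov-Witten varieties over a \emph{dense open} base, and then to split into the two cases dictated by Lemma~\ref{lem:cayleyG3}. First I would reduce to understanding general fibers of $\ev_3 : M_3(X_u,X^v,X_w) \to \pt$ for Schubert varieties in general position, via the following mechanism. Since $\ev_3(M_3(X_u,X^v)) = \Gamma_3(X_u,X^v)$ and the whole picture is $G$-equivariant (Kleiman transversality), a general point $z \in \Gamma_3(X_u,X^v)$ can be taken so that the triple $(X_u, X^v, z)$ is ``generic enough''; the fiber over $z$ is then $M_3(X_u,X^v,\{z\})$. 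More precisely, I would use Lemma~\ref{lem:meetU}: the condition $\Gamma_3(X_u,X^v) \neq \emptyset$ guarantees $\cZ_3(X_u,X^v) \cap U \neq \emptyset$, where $U$ is the $G$-stable dense open locus over which $\ev : M_3 \to \cZ_3$ has rational fibers $M_3(x,y,z)$. Then I would argue that for $(x,y,z)$ ranging over a dense subset of $\cZ_3(X_u,X^v)$, actually over the $G$-translate of points in $U$, the fiber $\ev_3^{-1}(z)$ over a general $z \in \Gamma_3(X_u,X^v)$ is dominated by (or built from) such rational pieces $M_3(x,y,z)$ as $x$ ranges over $X_u$ and $y$ over $X^v$.

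The key structural step is to fiber $\ev_3^{-1}(z)$ appropriately. Consider the map $\ev_3^{-1}(z) = M_3(X_u, X^v, \{z\}) \to X_u \times X^v$ sending $f$ to $(\ev_1(f), \ev_2(f))$; its image is $\cZ_3(X_u,X^v) \cap (X_u \times X^v \times \{z\})$ and its fibers are the varieties $M_3(x,y,z)$. For general $z$ I want: (i) the image of this map to be rationally connected, and (ii) the general fibers $M_3(x,y,z)$ to be rationally connected. Then Proposition~\ref{prop:ratconn} (Graber--Harris--Starr) gives rational connectedness of $\ev_3^{-1}(z)$. For (ii): in the ``big'' case where at least one of $X_u, X^v$ has positive dimension --- so $\Gamma_3(X_u,X^v) = X$ by Lemma~\ref{lem:cayleyG3} --- one can choose $(x,y)$ generic enough that $d(x,y) = d_X(2) = 2$ and $(x,y,z) \in U$ (using the $G$-transitivity on pairs at distance $2$ together with Lemma~\ref{lem:meetU}), so $M_3(x,y,z)$ is rational, hence rationally connected. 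In the ``small'' case, where $X_u = \{x\}$, $X^v = \{y\}$ are points, the base $X_u \times X^v$ is a single point and $\ev_3^{-1}(z) = M_3(x,y,z)$; if $d(x,y) = 2$ we are again in $U$ (after a translate) for general $z \in \Gamma_3(x,y) = $ (translate of the Schubert divisor), so the fiber is rational; if $d(x,y) < 2$, i.e.\ $d(x,y) \le 1$, then $\Gamma_3(x,y)$ is again all of $X$ by Lemma~\ref{lem:cayleyG3} and a similar genericity argument applies to put $(x,y,z)$ into $U$.

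For (i), the rational connectedness of the image $\cZ_3(X_u,X^v)\cap(X_u\times X^v\times\{z\})$: this is a subvariety of $X_u \times X^v$, and here I expect to invoke the ``quantum equals classical'' incidence structure of Section~\ref{sec:qclas} --- even though $Z_3$ for the Cayley plane is not homogeneous, one still has $\cZ_{3,2}$ and the fibration by the $X_\omega$'s, from which one reads that this image is (an open subset of) a projected Richardson-type variety, in particular unirational or rationally connected by the results of \cite{buch.chaput.ea:finiteness, chaput.perrin:rationality}. Alternatively, and perhaps more cleanly, I would bypass (i) by mapping $\ev_3^{-1}(z) = M_3(X_u,X^v,\{z\})$ directly to a point (not to $X_u \times X^v$) and using that $M_3(X_u,X^v,\{z\})$ is itself a Gromov-Witten variety $M_3(\Omega_1,\Omega_2,\Omega_3)$ with $\Omega_3 = \{z\}$, which by \cite[Thm.~2.5]{buch.chaput.ea:finiteness} (extended to three subvarieties as in the setup of this section) is unirational --- but one must be careful that this reference is stated for \emph{two} Schubert conditions, so the three-pointed case may genuinely require the fibration argument above. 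The main obstacle, therefore, is precisely this: establishing rational connectedness of $M_3(X_u,X^v,\{z\})$ for general $z$ when $d(x,y) = 2$ and the points are in special position forcing $z$ onto the Schubert divisor $X^{s_\ga}$ --- this is the case not covered by genericity landing inside $U$, and it must be handled by hand using the explicit geometry of the Cayley plane, its degree-$3$ rational curves, and \cite[Cor.~4.6]{buch.chaput.ea:finiteness}, exactly as the paper's remark about ``a separate argument'' for $E_6/P_6$, $d=3$ anticipates. I would expect the authors to resolve it by showing the fiber retracts onto, or fibers over, a rationally connected homogeneous-like variety built from the non-homogeneous $Z_3$ of \cite{chaput.perrin:rationality}, using Proposition~\ref{prop:ratconn} to assemble the pieces.
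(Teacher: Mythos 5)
Your skeleton matches the paper's: factor $\ev_3$ through $\cZ_3(X_u,X^v)$, dispose of the case $\dim X_u=\dim X^v=0$ via Lemma~\ref{lem:meetU}, and in the positive-dimensional case build rational connectedness of the general fiber by two applications of Proposition~\ref{prop:ratconn}. But the heart of the proof is exactly the step you label (i) and then defer: for a \emph{fixed} general $z$, the base of your fibration, namely $\cZ_3(X_u,X^v)\cap(X_u\times X^v\times\{z\})=\cZ_3(X_u,X^v,z)$, must be shown to be irreducible and rationally connected, and moreover the locus of $(x,y)$ in it with $(x,y,z)\in U$ must be dense (Lemma~\ref{lem:meetU} only gives density of $U$ in the total space $\cZ_3(X_u,X^v)$, which says nothing about a fixed fiber of $p_3$ unless you already know every component of that fiber meets $U$). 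Your suggested route for (i) --- reading off a ``projected Richardson-type'' structure from the incidence variety $Z_3$ --- is precisely what is unavailable here: $Z_3$ for the Cayley plane in degree $3$ is not homogeneous, which is why this separate argument exists at all. So as written the proposal identifies the obstacle but does not overcome it.

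The paper's resolution of this step is concrete and worth comparing with. One chooses $z_0$ with $d(x_0,z_0)=2$ and $\Gamma_2(x_0,z_0)$ $B$-stable, so that $\Gamma_3(x_0,z_0)$ is the $B$-stable Schubert divisor, and forms the dense open set $O=B^-B.(\{x_0\}\times X\times\{z_0\})\subset X^3$. Using a point $y'\in\Gamma_3(x_0,z_0)\cap X^v$ (this is where $\dim X^v\geq 1$ enters) one checks $O\cap U\cap\cZ_3(X_u,X^v)\neq\emptyset$, and then Kleiman transversality applied to the surjection $p_3:\cZ_3(X_u,X^v)\to X$ produces a dense open $V\subset X$ such that for $z\in V$ the fiber $\cZ_3(X_u,X^v,z)$ is locally irreducible and \emph{every} component meets $O\cap U$. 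Irreducibility then follows from connectedness, which is proved by fibering over $X_u$ via $p_1$: the fibers are $\Gamma_3(x,z)\cap X^v$, connected by Lemma~\ref{lem:cayleyG3} together with Proposition~\ref{prop:degrich}. Finally, membership in $O$ identifies the general $p_1$-fiber with a translate $b'.(\Gamma_3(x_0,z_0)\cap X^v)$ of a Richardson variety, hence rational, so Proposition~\ref{prop:ratconn} over the rational base $X_u$ gives rational connectedness of $\cZ_3(X_u,X^v,z)$; the fact that this fiber meets $U$ then feeds the second application of Proposition~\ref{prop:ratconn} to $\ev:M_3(X_u,X^v,z)\to\cZ_3(X_u,X^v,z)$. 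Without some version of this construction (the choice of $z_0$ and $O$, the Kleiman argument giving the dense open $V$ and the component-wise control, and the connectedness input from Proposition~\ref{prop:degrich}), your appeal to Graber--Harris--Starr cannot be made: neither the irreducibility of the base nor the genericity of the rational fibers over it is available.
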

\begin{proof}
  For convenience we let $x_0 = 1.P$ denote the $B$-fixed point and
  $y_0 = w_0.P$ the $B^-$-fixed point in $X$.  The evaluation map
  $\ev_3 : M_3(X_u,X^v) \to \Gamma_3(X_u,X^v)$ can be factored as
  \[
  M_3(X_u,X^v) \xrightarrow{\,\ev\,} \cZ_3(X_u,X^v)
  \xrightarrow{\,p_3\,} \Gamma_3(X_u,X^v)
  \]
  where $p_3$ is the restriction of the third projection $X^3 \to X$.
  If we have $\dim(X_u) = \dim(X^v) = 0$, then $X_u = \{x_0\}$,
  $X^v = \{y_0\}$, and the map $p_3$ is an isomorphism.  The
  proposition therefore follows from \Lemma{meetU}.

  Assume now that $\dim(X^v) \geq 1$.  Choose $z_0 \in X$ such that
  $d(x_0,z_0) = 2$ and $\Gamma_2(x_0,z_0)$ is a $B$-stable Schubert
  variety.  It then follows from
  \cite[Cor.~4.6]{buch.chaput.ea:finiteness} that $\Gamma_3(x_0,z_0)$
  is the $B$-stable Schubert divisor.  Since $G.(x_0,z_0)$ is a dense
  open subset of $X^2$ and $B^- B$ is a dense open subset of $G$, it
  follows that $O = B^-B.(\{x_0\} \times X \times \{z_0\})$ is a dense
  open subset of $X^3$.  We claim that
  $O \cap U \cap \cZ_3(X_u,X^v) \neq \emptyset$.  To see this, note at
  first that $\cZ_3(X_u,X^v) = \ev(M_3(X_u,X^v))$ is irreducible, so
  $U \cap \cZ_3(X_u,X^v)$ is a dense open subset by \Lemma{meetU}.  In
  addition, if $y' \in \Gamma_3(x_0,z_0) \cap X^v$ is any point, then
  $(x_0,y',z_0) \in O \cap \cZ_3(X_u,X^v)$, so $O \cap \cZ_3(X_u,X^v)$
  is also a dense open subset of $\cZ_3(X_u,X^v)$.  This proves the
  claim.  Since $p_3 : \cZ_3(X_u,X^v) \to X$ is surjective, it follows
  from Kleiman's transversality theorem \cite{kleiman:transversality}
  that there exists a dense open subset $V \subset X$ such that, for
  each $z \in V$ the fiber $p_3^{-1}(z) = \cZ_3(X_u,X^v,z)$ is locally
  irreducible and each component of this fiber meets $O \cap U$.

  Fix a point $z \in V$ and consider the first projection
  $p_1 : \cZ_3(X_u,X^v,z) \to X_u$.  For any point $x \in X_u$ we have
  $p_1^{-1}(x) = \cZ_3(x,X^v,z) \cong \Gamma_3(x,z) \cap X^v$.  Since
  this variety is connected by \Lemma{cayleyG3} and
  \Proposition{degrich}, we deduce that $\cZ_3(X_u,X^v,z)$ is
  connected and hence irreducible.  By definition of $V$ we have
  $\cZ_3(X_u,X^v,z) \cap O \neq \emptyset$.  If $(x,y,z)$ is any point
  in this intersection, then we may write $(x,z) = b'b.(x_0,z_0)$
  where $b' \in B^-$ and $b \in B$, after which we obtain
  $p_1^{-1}(x) \cong b'b.\Gamma_3(x_0,z_0) \cap X^v =
  b'.(\Gamma_3(x_0,z_0) \cap X^v)$.
  This shows that the general fibers of the map
  $p_1 : \cZ_3(X_u,X^v,z) \to X_u$ are translates of Richardson
  varieties, so it follows from \Proposition{ratconn} that
  $\cZ_3(X_u,X^v,z)$ is rationally connected.  The definition of $V$
  also implies that $\cZ_3(X_u,X^v,z) \cap U \neq \emptyset$.
  Therefore the general fibers of
  $\ev : M_3(X_u,X^v,z) \to \cZ_3(X_u,X^v,z)$ are rationally
  connected, so a second application of \Proposition{ratconn} shows
  that $M_3(X_u,X^v,z)$ is rationally connected for all $z \in V$.
  This finishes the proof.
\end{proof}

\section{The square of a point}\label{sec:square}

In this section we apply our results to compute the square of a point
in the (small) quantum $K$-theory ring $\QK(X)$ of any cominuscule
variety $X$.  The ring $\QK(X)$ is a formal deformation of the
Grothendieck ring $K(X)$, which as a group is defined by
$\QK(X) = K(X) \otimes_\Z \Z \llbracket q \rrbracket$.  The product
$\cO^u \star \cO^v$ of two Schubert structure sheaves in $\QK(X)$ is
given by
\[
\cO^u \star \cO^v = \sum_{w,d\geq 0} N^{w,d}_{u,v}\, q^d\, \cO^w
\]
where the sum is over all $w \in W^P$ and non-negative degrees $d$.
The structure constants $N^{w,d}_{u,v}$ are defined by the recursive
identity
\[
N^{w,d}_{u,v} \ = \ I_d(\cO^u, \cO^v, \cO_w^\vee) - \sum_{\kappa,e>0}
N^{\kappa,d-e}_{u,v} \cdot I_e(\cO^\kappa, \cO_w^\vee) \,,
\]
where the sum is over all $\kappa \in W^P$ and non-zero degrees $e$
with $0 < e \leq d$.  The degree zero constants $N^{w,0}_{u,v}$ are
the structure constants of the Grothendieck ring $K(X)$.  The quantum
$K$-theory ring $\QK(X)$ was defined by Givental \cite{givental:wdvv}
and was further studied in connection with the finite difference Toda
lattice \cite{givental.lee:quantum, braverman.finkelberg:finite}.
While the product $\cO^u \star \cO^v$ is defined as a power series in
$q$ and might {\em a priori\/} contain infinitely many non-zero terms,
it was proved in \cite{buch.chaput.ea:finiteness} that
$N^{w,d}_{u,v} = 0$ whenever $X$ is cominuscule and $d > d_X(2)$.  A
finiteness result for the quantum $K$-theory of a larger class of
homogeneous spaces was obtained in \cite{buch.chaput.ea:rational}.

\newcommand{\cOp}{\cO_{\mathrm{pt}}}
\newcommand{\pp}{\mathrm{pt}}
\begin{cor}
  Let $X$ be a cominuscule variety and let $\cOp \in K(X)$ denote the
  class of a point.  Choose $\kappa \in W^P$ such that
  $\Gamma_{d_X(2)}(1.P, w_0.P)$ is a translate of $X^\kappa$.  Then we
  have $\cOp \star \cOp = q^{d_X(2)}\, \cO^\kappa$ in $\QK(X)$.
\end{cor}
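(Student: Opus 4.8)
The plan is to compute $\cOp \star \cOp = \sum_{w,d\geq 0} N^{w,d}_{\pp,\pp}\, q^d\, \cO^w$ degree by degree, using the recursive definition of the structure constants together with Corollary~\ref{cor:pgw_class} and an explicit description of the projected Gromov-Witten varieties $\Gamma_d(\{1.P\}, \{w_0.P\})$. Since the point class may be represented as $\cOp = \cO_{s_0}$ where $s_0$ is the identity element of $W^P$ (so $X_{s_0} = \{1.P\}$ and $X^{s_0} = \{w_0.P\}$), the relevant Gromov-Witten invariants are $I_d(\cOp,\cOp,\cO_w^\vee) = I_d(\cO_{s_0}, \cO^{s_0}, \cO_w^\vee)$, and by Corollary~\ref{cor:pgw_class} these are the coefficients in the expansion $[\cO_{\Gamma_d(\{1.P\},\{w_0.P\})}] = \sum_w I_d(\cOp,\cOp,\cO_w^\vee)\, \cO^w$. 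So the whole computation reduces to identifying the variety $\Gamma_d(\{1.P\}, \{w_0.P\})$ for each $d$.

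First I would determine $\Gamma_d(\{1.P\}, \{w_0.P\})$ for all $d \geq 0$. For $d=0$ this is $X_{s_0} \cap X^{s_0} = \{1.P\} \cap \{w_0.P\}$, which is empty since the $B$-fixed point and $B^-$-fixed point are distinct (as $X$ has positive dimension); hence $N^{w,0}_{\pp,\pp} = 0$ for all $w$, i.e.\ $\cOp^2 = 0$ in $K(X)$, consistent with the absence of a degree-zero term. For $0 < d < d_X(2)$, the two points $1.P$ and $w_0.P$ are at distance exactly $d_X(2) > d$, so no connected rational curve of degree $d$ meets both, whence $\Gamma_d(\{1.P\},\{w_0.P\}) = \emptyset$ and all $N^{w,d}_{\pp,\pp} = 0$ for these intermediate degrees. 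For $d = d_X(2)$, by definition of $\kappa$ the variety $\Gamma_{d_X(2)}(1.P, w_0.P)$ is a translate of $X^\kappa$, so $[\cO_{\Gamma_{d_X(2)}(\{1.P\},\{w_0.P\})}] = \cO^\kappa$, giving $I_{d_X(2)}(\cOp,\cOp,\cO_w^\vee) = \delta_{w,\kappa}$. Finally, for $d > d_X(2)$ we invoke the finiteness result of \cite{buch.chaput.ea:finiteness} quoted in this section, which gives $N^{w,d}_{\pp,\pp} = 0$ directly.

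With these inputs, I would run the recursion. The recursive identity reads $N^{w,d}_{\pp,\pp} = I_d(\cOp,\cOp,\cO_w^\vee) - \sum_{\kappa', 0 < e \leq d} N^{\kappa',d-e}_{\pp,\pp} \cdot I_e(\cO^{\kappa'}, \cO_w^\vee)$. For $0 \le d < d_X(2)$, induction on $d$ shows all $N^{w,d}_{\pp,\pp}$ vanish: the classical term $I_d(\cOp,\cOp,\cO_w^\vee)$ is zero by the above, and every term in the correction sum contains a factor $N^{\kappa',d-e}_{\pp,\pp}$ with $d - e < d_X(2)$, hence zero by the inductive hypothesis. For $d = d_X(2)$, the correction sum again vanishes term by term (each factor $N^{\kappa',d-e}_{\pp,\pp}$ has $d - e < d_X(2)$), so $N^{w,d_X(2)}_{\pp,\pp} = I_{d_X(2)}(\cOp,\cOp,\cO_w^\vee) = \delta_{w,\kappa}$. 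Combined with the vanishing for all other degrees, this yields $\cOp \star \cOp = q^{d_X(2)}\, \cO^\kappa$.

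The one genuinely substantive step is establishing that $\Gamma_d(\{1.P\},\{w_0.P\}) = \emptyset$ for $0 < d < d_X(2)$, and that $\Gamma_d(\{1.P\},\{w_0.P\})$ does not unexpectedly become large for some $d$ strictly between $d_X(2)$ and $d_X(3)$ — but the former is exactly the content of the defining property of $d_X(2)$ (the distance between the two fixed points is $d_X(2)$ precisely because they are in general position, being fixed by opposite Borels), and the latter is handled uniformly by the finiteness theorem. So the main obstacle is not a hard argument but rather correctly assembling the known facts: the identification $\cOp = \cO_{s_0} = \cO^{s_0}$, the distance computation for opposite fixed points, the hypothesis on $\kappa$, and the quantum $K$-theory finiteness bound $d \le d_X(2)$. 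Everything else is bookkeeping in the recursion.
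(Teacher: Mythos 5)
Your proof is correct and follows essentially the same route as the paper: expand via the recursion, use Corollary~\ref{cor:pgw_class} together with the emptiness of $\Gamma_d(1.P,w_0.P)$ for $d<d_X(2)$ to kill the lower-degree constants by induction, identify $N^{w,d_X(2)}_{\pp,\pp}=\delta_{\kappa,w}$ from the hypothesis on $\kappa$, and invoke the finiteness theorem of \cite{buch.chaput.ea:finiteness} for $d>d_X(2)$. The only difference is that you spell out why $d(1.P,w_0.P)=d_X(2)$, which the paper takes for granted.
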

\begin{proof}
  Recall that $\Gamma_{d_X(2)}(1.P, w_0.P)$ is a Schubert variety in
  $X$ by \cite[Prop.~18]{chaput.manivel.ea:quantum*1}.  Let
  $N^{w,d}_{\pp,\pp}$ denote the structure constants defining the
  product $\cOp \star \cOp$.  It follows from \Corollary{pgw_class}
  that
  $I_d(\cOp, \cOp, \cO_w^\vee) = \euler{X}([\cO_{\Gamma_d(1.P,
    w_0.P)}] \cdot \cO_w^\vee)$
  for all degrees $d$ and $w \in W^P$.  Since
  $\Gamma_d(1.P,w_0.P) = \emptyset$ for $d < d_X(2)$, it follows by
  induction on $d$ that $N^{w,d}_{\pp,\pp} = 0$ for $d < d_X(2)$.
  This in turn implies that
  $N^{w,d_X(2)}_{\pp,\pp} = \euler{X}([\cO_{\Gamma_{d_X(2)}(1.P,
    w_0.P)}] \cdot \cO_w^\vee) = \euler{X}(\cO^\kappa \cdot
  \cO_w^\vee) = \delta_{\kappa,w}$.
  The corollary follows from this together with the fact
  \cite{buch.chaput.ea:finiteness} that $N^{w,d}_{\pp,\pp} = 0$ for
  $d > d_X(2)$.
\end{proof}


\providecommand{\bysame}{\leavevmode\hbox to3em{\hrulefill}\thinspace}
\providecommand{\MR}{\relax\ifhmode\unskip\space\fi MR }
\providecommand{\MRhref}[2]{%
  \href{http://www.ams.org/mathscinet-getitem?mr=#1}{#2}
}
\providecommand{\href}[2]{#2}

\end{document}